\newcommand{\nnum}{\nonumber}
\newcommand{\Ac}{\mathcal{A}}
\newcommand{\CC}{\mathcal{C}}
\newcommand{\EE}{\mathcal{E}}
\newcommand{\FF}{\mathcal{F}}
\newcommand{\GG}{\mathcal{G}}
\newcommand{\QQ}{\mathcal{Q}}
\newcommand{\LL}{\mathcal{L}}
\newcommand{\NN}{\mathcal{N}}
\newcommand{\until}[1]{\{1,\dots, #1\}}
\newcommand{\subscr}[2]{#1_{\textup{#2}}}
\newcommand{\supscr}[2]{#1^{\textup{#2}}}
\newcommand{\setdef}[2]{\{#1 \; | \; #2\}}
\newcommand{\FL}{\operatorname{FL}}
\newcommand{\inhomosyn}{\operatorname{DISCL}}
\newcommand{\homosyn}{\operatorname{DHSCL}}
\newcommand{\inhomoasyn}{\operatorname{DIACL}}
\newcommand{\homoasyn}{\operatorname{DHACL}}
\newtheorem{remark}{\bf Remark}[section]
\newtheorem{claim}{Claim}
\newcommand\oprocendsymbol{\hbox{$\bullet$}}
\newcommand\oprocend{\relax\ifmmode\else\unskip\hfill\fi\oprocendsymbol}
\begin{document}

\title{Distributed coverage games for mobile visual sensor networks}

%\author{ Minghui Zhu and Sonia Mart{\'\i}nez \thanks{This work was supported
% in part by NSF Career Award CMS-0643673 and NSF IIS-0712746. The authors are
%   with Department of Mechanical and Aerospace Engineering,
%   University of California, San Diego, 9500 Gilman Dr, La Jolla CA,
%   92093, {\tt\small \{mizhu,soniamd\}@ucsd.edu}}}

\author{ Minghui Zhu and Sonia Mart{\'\i}nez\thanks{The authors are
    with Department of Mechanical and Aerospace Engineering,
    University of California, San Diego, 9500 Gilman Dr, La Jolla CA,
    92093, {\tt\small \{mizhu,soniamd\}@ucsd.edu}}}

\maketitle

\begin{abstract}
  Motivated by current challenges in data-intensive sensor
  networks, we formulate a coverage optimization problem for mobile visual sensors as a
  (constrained) repeated multi-player game. Each visual sensor tries to
  optimize its own coverage while minimizing the processing cost. We
  present two distributed learning algorithms where each sensor only
  remembers its own utility values and actions played during the last
  plays. These algorithms are proven to be convergent in probability
  to the set of (constrained) Nash equilibria and global optima of
  certain coverage performance metric, respectively.
\end{abstract}

\section{Introduction}

There is a widespread belief that continuous and pervasive monitoring
will be possible in the near future with large numbers of networked,
mobile, and wireless sensors.  Thus, we are witnessing an intense
research activity that focuses on the design of efficient control
mechanisms for these systems. In particular, decentralized algorithms
would allow sensor networks to react autonomously to changes in the
environment with minimal human supervision.

A substantial body of research on sensor networks has concentrated on
simple sensors that can collect scalar data; e.g.,~temperature,
humidity or pressure data. Here, a main objective is the design of
algorithms that can lead to optimal collective sensing through
efficient motion control and communication schemes. However, scalar
measurements can be insufficient in many situations; e.g.,~in
automated surveillance or traffic monitoring applications. In
contrast, data-intensive sensors such as cameras can collect visual
data that are rich in information, thus having tremendous potential
for monitoring applications, but at the cost of a higher processing
overhead.

Precisely, this paper aims to solve a coverage optimization problem
taking into account part of the sensing/processing trade-off.
Coverage optimization problems have mainly been formulated as
cooperative problems where each sensor benefits from sensing the
environment as a member of a group. However, sensing may also require
expenditure; e.g.,~the energy consumed or the time spent by image
processing algorithms in visual networks. Because of this, we endow
each sensor with a utility function that quantifies this trade-off,
formulating a coverage problem as a variation of congestion games
in~\cite{RWR:73}.

{\em Literature review.} In broad terms, the problem studied here is
related to a bevy of sensor location and planning problems in the
Computational Geometry, Geometric Optimization, and Robotics
literature. For example, different variations on the (combinatorial)
Art Gallery problem include~\cite{JOR:87}\cite{TCS:92}\cite{JU:00}.
The objective here is how to find the optimum number of guards in a
non-convex environment so that each point is visible from at least one
guard. A related set of references for the deployment of mobile robots
with omnidirectional cameras
includes~\cite{AG-JC-FB:05k}\cite{AG-JC-FB:06r}.  Unlike the Art
Gallery classic algorithms, the latter papers assume that robots have
local knowledge of the environment and no recollection of the
past. Other related references on robot deployment in convex
environments include~\cite{JC-SM-FB:03p}\cite{KL-JC:09} for
anisotropic and circular footprints.

The paper~\cite{IFA-TM-KC:07} is an excellent survey on multimedia
sensor networks where the state of the art in algorithms, protocols,
and hardware is surveyed, and open research issues are discussed in
detail. As observed in~\cite{RC:05}, multimedia sensor networks
enhance traditional surveillance systems by enlarging, enhancing, and
enabling multi-resolution views. The investigation of coverage
problems for static visual sensor networks is conducted
in~\cite{KYC-KSL-EYL:07}\cite{EH-RL:06}.

Another set of relevant references to this paper comprise those on the
use of game-theoretic tools to (i) solve static target assignment
problems, and (ii) devise efficient and secure algorithms for
communication networks. In~\cite{JRM-AW:08}, the authors present a
game-theoretic analysis of a coverage optimization problem for static
sensor networks. This problem is equivalent to the weapon-target
assignment problem in~\cite{RAM:99} which is nondeterministic
polynomial-time complete. In general, the solution to assignment
problems is hard from a combinatorial optimization viewpoint.

Game Theory and Learning in Games are used to analyze a variety of
fundamental problems in; e.g.,~wireless communication networks and the
Internet. An incomplete list of references includes~\cite{TA-TB-SD:06}
on power control,~\cite{TR:05} on routing, and~\cite{AT-LA:08} on flow
control. However, there has been limited research on how to employ
Learning in Games to develop distributed algorithms for mobile sensor
networks.  One exception is the paper~\cite{JRM-GA-JSS:08} where the
authors establish a link between cooperative control problems (in
particular, consensus problems), and games (in particular, potential
games and weakly acyclic games).

{\em Statement of contributions.} The contributions of this paper
pertain to both coverage optimization problems and Learning in Games.
Compared with~\cite{AK-SM:08b} and~\cite{KL-JC:09}, this paper employs
a more accurate sensing model and the results can be easily extended
to include non-convex environments. Contrary to~\cite{AK-SM:08b}, we
do not consider energy expenditure from sensor motions.

Regarding Learning in Games, we extend the use of the payoff-based
learning dynamics in~\cite{JRM-JSS:08}\cite{JRM-HPY-GA-JSS:06}.  The
coverage game we consider here is shown to be a (constrained) potential
game. A number of learning rules; e.g.,~better (or best) reply dynamics
and adaptive play, have been proposed to reach Nash equilibria in
potential games.  In these algorithms, each player must have access to
the utility values induced by alternative actions. In our problem
set-up; however, this information is unaccessible because of the
information constraints caused by unknown rewards, motion and sensing
limitations.  To tackle this challenge, we develop two distributed
payoff-based learning algorithms where each sensor only remembers its
own utility values and actions played during the last plays.

In the first algorithm, at each time step, each sensor repeatedly
updates its action synchronously, either trying some new action or
selecting the action which corresponds to a higher utility value in
the most recent two time steps.  The first advantage of this algorithm
over the payoff-based learning algorithms
of~\cite{JRM-JSS:08}\cite{JRM-HPY-GA-JSS:06} is its simpler dynamics,
which reduces the computational complexity. Furthermore, the algorithm
employs a diminishing exploration rate (in contrast to the constant
one in~\cite{JRM-JSS:08}\cite{JRM-HPY-GA-JSS:06}). The dynamically
changing exploration rate renders the algorithm an inhomogeneous
Markov chain (instead of the homogeneous ones
in~\cite{JRM-JSS:08}\cite{JRM-HPY-GA-JSS:06}). This technical change
allows us to prove convergence in probability to the set of
(constrained) Nash equilibria from which no agent is willing to
unilaterally deviate. Thus, the property is substantially stronger
than those in~\cite{JRM-JSS:08}\cite{JRM-HPY-GA-JSS:06} where the
algorithms are guaranteed to converge to Nash equilibria with a
sufficiently large probability by choosing a sufficiently small
exploration rate in advance.

The second algorithm is asynchronous. At each time step, only one
sensor is active and updates its state by either trying some new
action or selecting an action according to a Gibbs-like distribution
from those played in last two time steps when it was active. The
algorithm is shown to be convergent in probability to the set of
global maxima of a coverage performance metric. Compared with the
synchronous payoff-based log-linear learning algorithm
in~\cite{JRM-JSS:08}, this algorithm is asynchronous and
simpler. Furthermore, rather than maximizing the associated potential
function, the second algorithm optimizes a different global function
which captures better a global trade-off between the overall network
benefit from sensing and the total energy the network consumes. Again,
by employing a diminishing exploration rate, our algorithm is
guaranteed to have stronger convergence properties that the ones
in~\cite{JRM-JSS:08}.

% {\em Organization.} \margin{if there is little space, we can
%   actually remove this part.}In Section~\ref{sec:problem}, we
% briefly review some basic concepts from Game Theory and introduce a
% game-theoretic formulation of a coverage optimization problem for
% mobile visual sensor networks. We then present a synchronous
% distributed learning algorithm that agents can implement. The
% convergence properties of this algorithm can be established in terms
% of the Theory of Resistance Trees and time-inhomogenous Markov
% chains reviewed in Section~\ref{sec:preliminaries}.  \margin{are we
%   going to include a simulation? That is kind of important.}Using
% these tools, we analyze the convergence properties of the proposed
% algorithm in Section~\ref{sec:syn-algorithm}.

\section{Problem formulation}\label{sec:problem}

Here, we first review some basic game-theoretic concepts; see, for
example~\cite{DF-JT:91}. This will allow us to formulate subsequently
an optimal coverage problem for mobile visual sensor networks as a
repeated multi-player game.  We then introduce notation used
throughout the paper.

\subsection{Background in Game Theory}\label{sec:gametheory}

A strategic game $\Gamma := \langle V, A, U \rangle$ has three
components:
\begin{itemize}
\item[1.] A set $V$ enumerating players $i \in V := \{1,\cdots,N\}$.
\item[2.] An action set $A := \prod_{i=1}^N A_i$ is the space of all
  actions vectors, where $s_i\in A_i$ is the action of player $i$ and
  an (multi-player) action $s\in A$ has components $s_1,\dots, s_N$.
\item[3.] The collection of utility functions $U$, where the utility
  function $u_i : A \rightarrow \mathbb{R}$ models player $i$'s
  preferences over action profiles.
\end{itemize}

Denote by $s_{-i}$ the action profile of all players other than $i$,
and by $A_{-i}=\prod_{j\neq i}A_j$ the set of action profiles for all
players except $i$.  The concept of (pure) Nash equilibrium (NE, for
short) is the most important one in Non-cooperative Game
Theory~\cite{DF-JT:91} and is defined as follows.

\begin{definition}[Nash equilibrium~\cite{DF-JT:91}] Consider the
  strategic game $\Gamma$. An action profile $s^{*} := (s_i^*,
  s_{-i}^*)$ is a (pure) NE of the game $\Gamma$ if $\forall i\in V$
  and $\forall s_i\in A_i$, it holds that $u_i(s^*)\geq u_i(s_i,
  s_{-i}^*)$. \label{def3}
\end{definition}
% In Non-cooperative Game Theory, each player is self-interested and
% seeks to maximize its own utility.

An action profile corresponding to an NE represents a scenario where
no player has incentive to unilaterally deviate. Potential Games form
an important class of strategic games where the change in a player's
utility caused by a unilateral deviation can be measured by a
potential function.
\begin{definition}[Potential game~\cite{DM-LS:96}] The strategic game
  $\Gamma$ is a potential game with potential function $\phi :
  A\rightarrow {\mathbb{R}}$ if for every $i\in V$, for every
  $s_{-i}\in A_{-i}$, and for every $s_i, s_i'\in A_i$, it holds that
  \begin{align}
    \phi(s_i, s_{-i})-\phi(s_i', s_{-i}) = u_i(s_i,
    s_{-i})-u_i(s_i', s_{-i}).\label{e4}
  \end{align}
  \label{def2}
\end{definition}

In conventional Non-cooperative Game Theory, all the actions in
$A_{i}$ always can be selected by player $i$ in response to other
players' actions. However, in the context of motion coordination, the
actions available to player $i$ will often be constrained to a
state-dependent subset of $A_i$. In particular, we denote by $F_i(s_i,
s_{-i})\subseteq A_i$ the set of feasible actions of player $i$ when
the action profile is $s := (s_i, s_{-i})$. We assume that $F_i(s_i,
s_{-i})\neq \emptyset$. Denote $F(s) := \prod_{i\in V}F_i(s) \subseteq
A$, $\forall s\in A$ and $F := \cup\{F(s)\;|\;s\in A\}$. The
introduction of $F$ leads naturally to the notion of constrained
strategic game $\subscr{\Gamma}{res} := \langle V, A, U, F \rangle$,
and the following associated concepts.
\begin{definition}[Constrained Nash equilibrium] Consider the
  constrained strategic game $\subscr{\Gamma}{res}$.  An action profile
  $s^{*}$ is a constrained (pure) NE of the game $\subscr{\Gamma}{res}$
  if $\forall i\in V$ and $\forall s_i\in F_i(s_i^*, s_{-i}^*)$, it
  holds that $u_i(s^*)\geq u_i(s_i, s_{-i}^*)$. \label{def1}
\end{definition}

\begin{definition}[Constrained potential game] The game
  $\subscr{\Gamma}{res}$ is a constrained potential game with potential
  function $\phi(s)$ if for every $i\in V$, every $s_{-i}\in A_{-i}$,
  and every $s_i\in A_i$, the equality~\eqref{e4} holds for every
  $s_i'\in F_i(s_i, s_{-i})$. \label{def4}
\end{definition}

Observe that if $s^*$ is an NE of the strategic game $\Gamma$, then it
is also a constrained NE of the constrained strategic game
$\subscr{\Gamma}{res}$. For any given strategic game, NE may not
exist. However, the existence of NE in potential games is
guaranteed~\cite{DM-LS:96}.  Hence, any constrained potential game has
at least one constrained NE.

\subsection{Coverage problem formulation}
\subsubsection{Mission space}

We consider a convex 2-D mission space that is discretized into a
(squared) lattice.  We assume that each square of the lattice has unit
dimensions. Each square will be labeled with the coordinate of its
center $q=(q_x, q_y)$, where $q_x\in [q_{x_{\min}}, q_{x_{\max}}]$ and
$q_y\in [q_{y_{\min}}, q_{y_{\max}}]$, for some integers
$q_{x_{\min}},\; q_{y_{\min}},$ $ q_{x_{\max}},\; q_{y_{\max}}$.
Denote by ${\mathcal{Q}}$ the collection of all squares of the
lattice.

We now define an associated location graph $\subscr{\GG}{loc} :=
({\QQ}, \subscr{E}{loc})$ where $((q_x, q_y),$ $(q_{x'}, q_{y'}))\in
\subscr{E}{loc}$ if and only if $|q_x -q_{x'}|+|q_y - q_{y'}|=1$ for
$(q_x, q_y), (q_{x'}, q_{y'})\in{\mathcal{Q}}$. Note that the graph
$\subscr{\GG}{loc}$ is undirected; i.e., $(q, q')\in \subscr{E}{loc}$
if and only if $(q', q)\in \subscr{E}{loc}$. The set of neighbors of
$q$ in $\subscr{E}{loc}$ is given by $\supscr{\NN}{loc}_{q} :=
\{q'\in\QQ\setminus \{q\} \;|\; (q, q')\in \subscr{E}{loc}\}$. We
assume that the location graph ${\GG}_{\rm loc}$ is fixed and
connected, and denote its diameter by $D$.

Agents are deployed in $\QQ$ to detect certain events of interest. As
agents move in $\QQ$ and process measurements, they will assign a
numerical value $W_{q}\ge 0$ to the events in each square with center
$q \in \QQ$.  If $W_{q}=0$, then there is no significant event at the
square with center $q$. The larger the value of $W_{q}$ is, the more
interest the set of events at the square with center $q$ is of.
Later, the amount $W_{q}$ will be identified with a benefit of
observing the point $q$. In this set-up, we assume the values $W_{q}$
to be constant in time. Furthermore, $W_q$ is not a prior knowledge to
the agents, but the agents can measure this value through sensing the
point $q$.

% A group of targets of interest are allocated on $\mathcal{Q}$.
% There is at most one target at each location of $\mathcal{Q}$. The
% quantity $W_{q} > 0$ is the value associated with the target
% located at $(q_x, q_y)$ ($W_{q} = 0$ if no target is located
% at $(q_x, q_y)$).

\subsubsection{Modeling of the visual sensor nodes}

Each mobile agent~$i$ is modeled as a point mass in $\QQ$, with
location $a_i := (x_i, y_i) \in \QQ$. Each agent has mounted a pan-tilt-zoom camera,
and can adjust its orientation and focal length.
% Besides collecting visual information, we assume agents can process
% this information, and communicate with others.

The visual sensing range of a camera is directional, limited-range, and has a
finite angle of view. Following a geometric simplification, we model
the visual sensing region of agent~$i$ as an annulus sector in the
2-D plane; see Figure~\ref{fig1}.

\begin{figure}[th]
  \centerline{ \epsfxsize=3in \epsffile{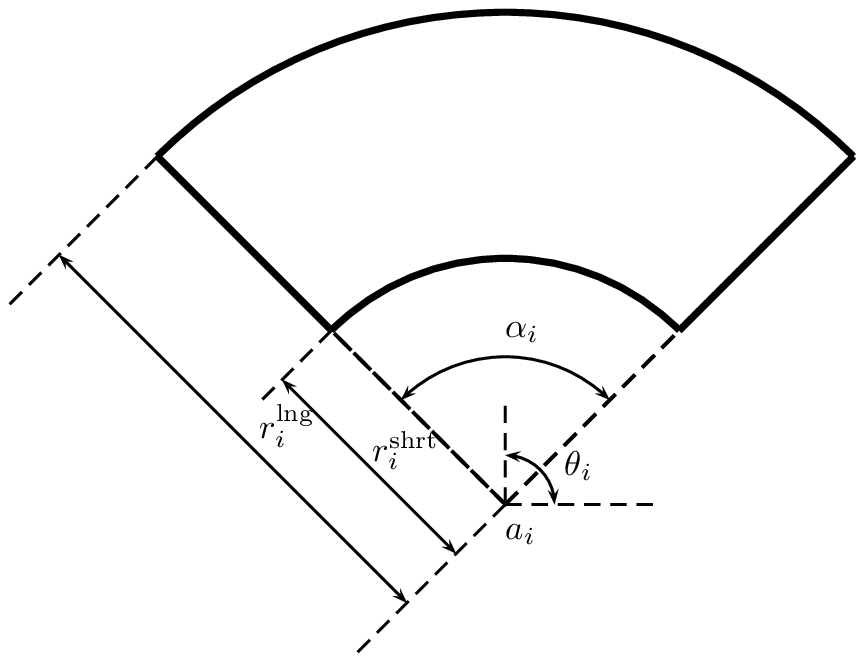},\epsfxsize=3.5in \epsffile{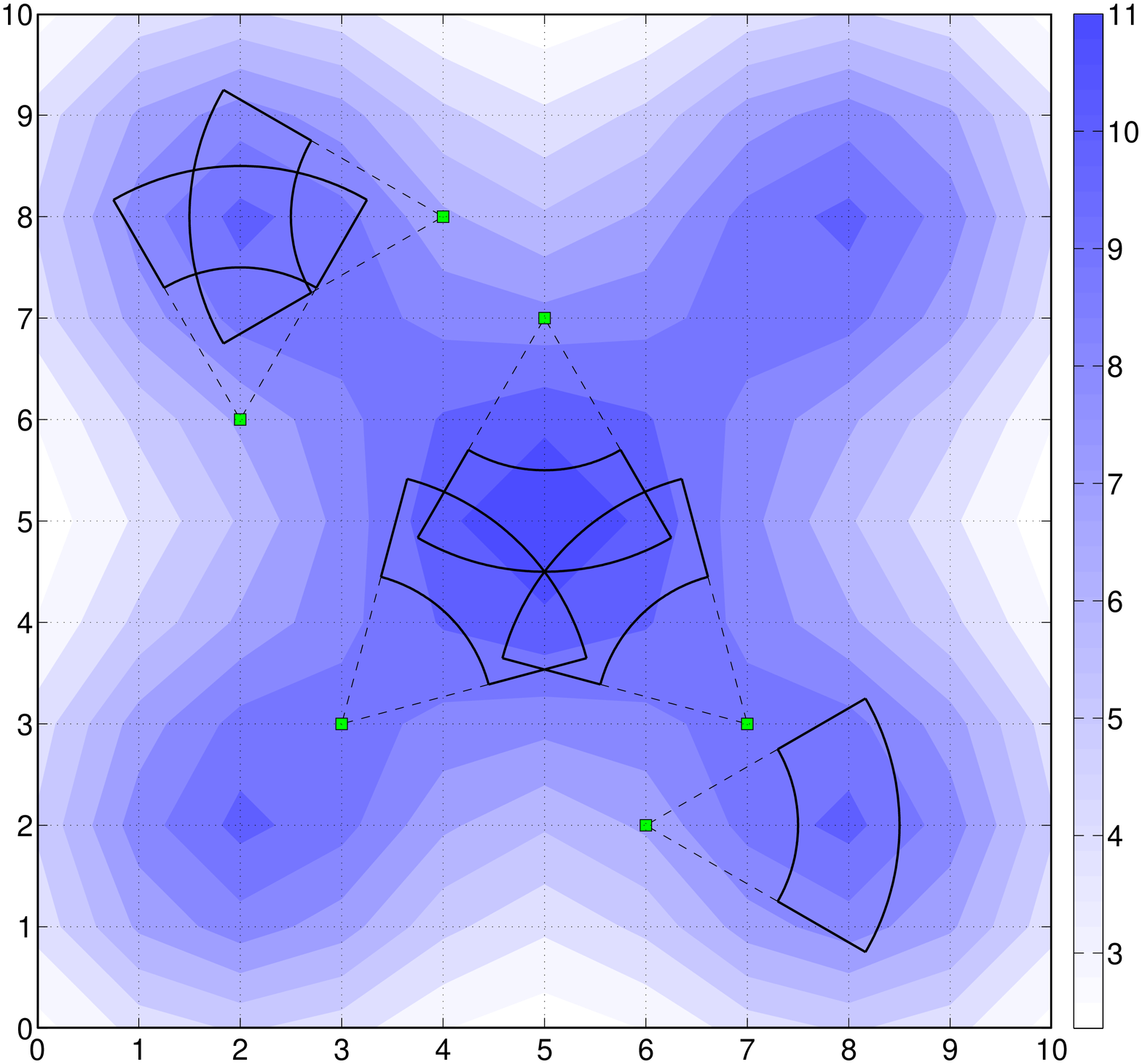}}
  \caption{Visual sensor footprint and a configuration of the mobile sensor network}\label{fig1}
\end{figure}

The visual sensor footprint is completely characterized by the
following parameters: the position of agent~$i$, $a_i\in \QQ$, the
camera orientation, $\theta_i \in [0,2\pi)$, the camera angle of view,
$\alpha_i \in [\subscr{\alpha}{min},\subscr{\alpha}{max}]$, and the
shortest range (resp.~longest range) between agent $i$ and the nearest
(resp.~farthest) object that can be recognized from the image,
$\supscr{r}{shrt}_i \in [\subscr{r}{min},\subscr{r}{max}]$
(resp.~$\supscr{r}{lng}_i \in [\subscr{r}{min},\subscr{r}{max}]$).
The parameters $\supscr{r}{shrt}_i$, $\supscr{r}{lng}_i$, $\alpha_i$
can be tuned by changing the focal length $\FL_i$ of agent $i$'s
camera.  In this way, $c_i : = (\FL_i, \theta_i)\in
[0,\subscr{\FL}{max}]\times [0,2\pi)$ is the camera control vector of
agent $i$. In what follows, we will assume that $c_i$ takes values in
a finite subset $\CC \subset [0,\subscr{\FL}{max}]\times [0,2\pi)$. An
agent action is thus a vector $s_i:=(a_i, c_i)\in {\mathcal{A}}_i :=
\QQ\times \CC$, and a multi-agent action is denoted by
$s=(s_1,\dots,s_N) \in \Ac := \Pi_{i=1}^N
\Ac_i$. % The joint action space of the sensors is denoted by
% ${\mathcal{A}} := {\mathcal{Q}}^N\times{{\mathcal{C}}}^N$

Let ${\mathcal{D}}(a_i,c_i)$ be the visual sensor footprint of
agent~$i$. Now we can define a proximity sensing
graph\footnote{See~\cite{FB-JC-SM:08} for a definition of proximity
  graph.} $\subscr{\GG}{sen}(s) := (V, \subscr{E}{sen}(s))$ as
follows:
% $(i,j)\in \subscr{E}{sen}(s)$ if and only if
the set of neighbors of agent $i$, $\supscr{\NN}{sen}_i(s)$, is given as
$\supscr{\NN}{sen}_i(s) := \{j \in V\backslash\{i\} \;|\;
{\mathcal{D}}(a_i, c_i)\cap{\mathcal{D}}(a_j,
c_j)\cap{\mathcal{Q}}\neq\emptyset\}$.

Each agent is able to communicate with others to exchange information.
We assume that the communication range of agents is
$2\subscr{r}{max}$.  This induces a $2\subscr{r}{max}$-disk
communication graph $\subscr{\GG}{comm}(s) := (V,
\subscr{E}{comm}(s))$ as follows: the set of neighbors of agent $i$ is
given by $\supscr{\mathcal{N}}{comm}_i(s) := \setdef{j \in
  V\backslash\{i\}} {(x_i - x_j)^2+(y_i - y_j)^2 \leq
  (2\subscr{r}{max})^2}$. Note that $\subscr{\GG}{comm}(s)$ is
undirected and that $\subscr{\GG}{sen}(s)\subseteq
\subscr{\GG}{comm}(s)$.

The motion of agents will be limited to a neighboring point in
$\subscr{\GG}{loc}$ at each time step. Thus, an agent feasible action
set will be given by
${\FF}_i(a_i):=(\{a_i\}\cup\supscr{\NN}{loc}_{a_i})\times
{\mathcal{C}}$.

\subsubsection{Coverage game}

We now proceed to formulate a coverage optimization problem as a
constrained strategic game. For each $q\in{\QQ}$, we denote $n_q(s)$ as
the cardinality of the set $\{k\in V \;|\; q\in {\mathcal{D}}(a_k,
c_k)\cap{\mathcal{Q}}\}$; i.e., the number of agents which can observe
the point $q$.  The ``profit'' given by $W_{q}$ will be equally shared
by agents that can observe the point $q$.  The benefit that agent $i$
obtains through sensing is thus defined by $\sum_{q
  \in{\mathcal{D}}(a_i,c_i)\cap{\mathcal{Q}}}\frac{W_{q}}{n_{q}(s)}$.

On the other hand, and as argued in~\cite{CBM-VP-KO-RM:06}, the
processing of visual data can incur a higher cost than that of
communication. This is in contrast with scalar sensor networks, where
the communication cost dominates.
% that communication-related
% tasks are less energy-consuming than intensive processing and flash
% access when the radio modules are loaded.
With this observation, we model the energy consumption of agent $i$ by
$f_i(c_i) := \frac{1}{2}\alpha_i
((\supscr{r}{lng}_i)^2-(\supscr{r}{shrt}_i)^2)$. This measure
corresponds to the area of the visual sensor footprint and can serve
to approximate the energy consumption or the cost incurred by image
processing algorithms.

We will endow each agent with a utility function that aims to capture
the above sensing/processing trade-off.  In this way, we define a
utility function for agent $i$ by
\begin{align*}
  u_i(s) = \sum_{q\in{\mathcal{D}}(a_i,
    c_i)\cap{\mathcal{Q}}}\frac{W_{q}}{n_{q}(s)} - f_i(c_i).
\end{align*}
Note that the utility function $u_i$ is local over the visual sensing
graph $\subscr{\mathcal{G}}{sen}(s)$; i.e., $u_i$ is only dependent on
the actions of $\{i\}\cup \supscr{\NN}{sen}_i(s)$.
%To save notations, we keep the argument of $u_i$ as $s$.
With the set of utility functions $\subscr{U}{cov}=\{u_i\}_{i\in V}$,
and feasible action set $\subscr{\FF}{cov} = \Pi_{i=1}^N\bigcup_{a_i
  \in \Ac_i} \FF_i(a_i)$, we now have all the ingredients to introduce
the coverage game $\subscr{\Gamma}{cov} := \langle V, {\mathcal{A}},
\subscr{U}{cov}, \subscr{\FF}{cov}\rangle$. This game is a variation
of the congestion games introduced in~\cite{RWR:73}.

\begin{lemma}
  The coverage game $\subscr{\Gamma}{cov}$ is a constrained potential
  game with potential function
\begin{align*}
  \phi(s) = \sum_{q\in {\mathcal{Q}}}\sum_{\ell=1}^{n_q(s)}\frac{W_q}{\ell}-\sum_{i=1}^N
  f_i(c_i).\end{align*}\label{lem1}
\end{lemma}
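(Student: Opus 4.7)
The plan is to verify the identity $\phi(s_i',s_{-i})-\phi(s_i,s_{-i}) = u_i(s_i',s_{-i})-u_i(s_i,s_{-i})$ for any player $i$, any $s_{-i}\in A_{-i}$, and any pair $s_i,s_i'\in A_i$; the constrained-potential property of Definition~\ref{def4} then follows immediately by restricting $s_i'$ to $F_i(s_i,s_{-i})$. Since $\phi$ splits into a sensing term and an energy term, I would handle these two pieces separately.

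For the energy part, only the $i$th summand of $-\sum_j f_j(c_j)$ changes under a unilateral deviation of $i$, contributing exactly $-(f_i(c_i')-f_i(c_i))$ to $\phi(s_i',s_{-i})-\phi(s_i,s_{-i})$. This matches the change in the $-f_i(c_i)$ term of $u_i$, so the energy terms cancel consistently on both sides of the desired identity.

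The main step is the sensing part. Writing $n_q:=n_q(s_i,s_{-i})$ and $n_q':=n_q(s_i',s_{-i})$, I would partition $\mathcal{Q}$ into four classes according to whether $q$ is covered by agent $i$ before and/or after the deviation (i.e.\ whether $q\in\mathcal{D}(a_i,c_i)$ and whether $q\in\mathcal{D}(a_i',c_i')$), and observe that only the actions of agent $i$ change, so the contributions of all other agents to $n_q$ are the same under $s$ and $s'$. In the two classes where $i$'s coverage status at $q$ is unchanged, $n_q'=n_q$, so both $\phi$ and the sensing term of $u_i$ contribute nothing to either side of the identity. In the class where $i$ drops coverage of $q$, $n_q'=n_q-1$ and the telescoping harmonic sum gives $\sum_{\ell=1}^{n_q'} W_q/\ell - \sum_{\ell=1}^{n_q} W_q/\ell = -W_q/n_q$, which exactly matches the sensing contribution $0 - W_q/n_q$ that agent $i$ loses in $u_i$. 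Symmetrically, in the class where $i$ newly covers $q$, $n_q'=n_q+1$, the potential changes by $W_q/(n_q+1)$, and agent $i$'s sensing term in $u_i$ gains $W_q/(n_q+1)$.

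Summing these contributions over $q\in\mathcal{Q}$ and adding in the energy change yields the identity~\eqref{e4} on the full, unconstrained action set, which in particular implies it on $F_i(s_i,s_{-i})$, establishing that $\Gamma_{\text{cov}}$ is a constrained potential game with the stated $\phi$. The only subtlety I anticipate is being careful that no other player's coverage count changes under $i$'s deviation (only agents whose footprint contains $q$ count toward $n_q$, and the footprints of agents $j\neq i$ depend only on $s_j$), but once that observation is in place the case analysis is routine.
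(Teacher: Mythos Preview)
Your proposal is correct and follows essentially the same approach as the paper: the paper also partitions $\mathcal{Q}$ according to agent $i$'s coverage before and after the deviation (writing the two nontrivial classes as $\Omega_1=(\mathcal{D}(a_i,c_i)\setminus\mathcal{D}(a_i',c_i'))\cap\mathcal{Q}$ and $\Omega_2=(\mathcal{D}(a_i',c_i')\setminus\mathcal{D}(a_i,c_i))\cap\mathcal{Q}$), uses $n_q(s)=n_q(s')\pm 1$ on these sets to telescope the harmonic sum, and matches the result to $u_i(s)-u_i(s')$ together with the $f_i$ terms. The only cosmetic differences are that the paper computes $\phi(s)-\phi(s')$ rather than $\phi(s')-\phi(s)$ and restricts to $s_i'\in\mathcal{F}_i(a_i)$ from the outset, whereas you prove the unconstrained identity first and then specialize.
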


\begin{proof} The proof is a slight variation of that
  in~\cite{RWR:73}. Consider any $s:=(s_i, s_{-i})\in {\mathcal{A}}$
  where $s_i := (a_i, c_i)$. We fix $i\in V$ and pick any $s_i'=(a_i',
  c_i')$ from ${\FF}_i(a_i)$. Denote $s':=(s_i', s_{-i})$, $\Omega_1
  := ({\mathcal{D}}(a_i, c_i)\backslash {\mathcal{D}}(a_i',
  c_i'))\cap{\mathcal{Q}}$ and $\Omega_2 := ({\mathcal{D}}(a_i',
  c_i')\backslash {\mathcal{D}}(a_i, c_i))\cap{\mathcal{Q}}$. Observe
  that
\begin{align*}& \phi(s_i, s_{-i})-\phi(s_i',
s_{-i})\nnum\\
% &= \sum_{(q_x, q_y)\in {\mathcal{Q}}}\sum_{\ell=1}^{n_{(q_x,
%     q_y)}(s)}\frac{W_{(q_x, q_y)}}{\ell}
% -\sum_{(q_x, q_y)\in{\mathcal{Q}}}\sum_{\ell=1}^{n_{(q_x, q_y)}(s')}\frac{W_{(q_x, q_y)}}{\ell}\nnum\\
% &-f_i(r_i)+f_i(r_i')\nnum\\
&= \sum_{q\in \Omega_1}(\sum_{\ell=1}^{n_q(s)}\frac{W_q}{\ell}-\sum_{\ell=1}^{n_q(s')}\frac{W_q}{\ell})
+\sum_{q\in \Omega_2}(-\sum_{\ell=1}^{n_q(s)}\frac{W_q}{\ell}+\sum_{\ell=1}^{n_q(s')}\frac{W_q}{\ell})
-f_i(c_i)+f_i(c_i')\nnum\\
&= \sum_{q\in \Omega_1}\frac{W_q}{n_q(s)}-\sum_{q\in \Omega_2}\frac{W_q}{n_q(s')}-f_i(c_i)+f_i(c_i')\nnum\\
&= u_i(s_i,s_{-i})-u_i(s_i',s_{-i})
\end{align*} where in the second equality we utilize the fact that for
each $q\in \Omega_1$, $n_q(s) = n_q(s')+1$, and each $q\in \Omega_2$, $n_q(s')= n_q(s)+1$.
\end{proof}

We denote by ${\mathcal{E}}(\subscr{\Gamma}{cov})$ the set of constrained
NEs of $\subscr{\Gamma}{cov}$. It is worth mentioning that
${\mathcal{E}}(\subscr{\Gamma}{cov})\neq\emptyset$ due to the fact that
$\subscr{\Gamma}{cov}$ is a constrained potential game.

\begin{remark} The assumptions of our problem formulation admit
  several extensions. For example, it is straightforward to extend our
  results to non-convex 3-D spaces. This is because the results that
  follow can also handle other shapes of the sensor footprint; e.g.,~a
  complete disk, a subset of the annulus sector. On the other hand,
  note that the coverage problem can be interpreted as a target
  assignment problem---here, the value $W_q\geq0$ would be associated
  with the value of a target located at the point
  $q$.\oprocend\label{rem1}
\end{remark}

\subsection{Notations}
In the following, we will use the Landau symbol, $O$, as in
$O(\epsilon^k)$, for some $k\geq0$. This implies that
$0<\lim_{\epsilon\rightarrow0^+}\frac{O(\epsilon^k)}{\epsilon^k}<+\infty$.  We denote by $\diag{\Ac} :=
\setdef{(s,s)\in {\mathcal{A}}^2}{s\in \mathcal{A}}$ and
$\diag{\EE(\subscr{\Gamma}{cov})}:=\setdef{(s, s)\in
  {\mathcal{A}}^{2}}{ s\in {\mathcal{E}}(\Gamma_{\rm cov})}$.
% The index $\tau_i(s(t-1)),s(t))$ is defined as
% \begin{align*}
%   \tau_i(s(t-1),s(t)) =
% \begin{cases}
% t-1 & u_i(s(t-1)) \geq u_i(s(t)),\\
% t & u_i(s(t-1)) < u_i(s(t)).
% \end{cases}
% \end{align*}

Consider $a, a'\in {\mathcal{Q}^N}$ where $a_i\neq a_i'$ and
$a_{-i}=a_{-i}'$ for some $i\in V$. The transition $a\rightarrow a'$
is feasible if and only if $(a_i, a_i')\in \subscr{E}{loc}$. A
feasible path from $a$ to $a'$ consisting of multiple feasible
transitions is denoted by $a\Rightarrow a'$. Let $\diamond a :=
\setdef{a'\in\mathcal{Q}}{a \Rightarrow a'}$ be the reachable set from
$a$.

Let $s = (a, c), s' = (a', c') \in {\mathcal{A}}$ where $a_i \neq
a_i'$ and $a_{-i} = a_{-i}'$ for some $i\in V$. The transition $s
\rightarrow s'$ is feasible if and only if $s_i' \in {\FF}_i(a)$. A
feasible path from $s$ to $s'$ consisting of multiple feasible
transitions is denoted by $s \Rightarrow s'$. Finally, $\diamond s :=
\setdef{s' \in {\mathcal{A}}}{ s \Rightarrow s'}$ will be the
reachable set from $s$.

%\begin{align*}& \phi(s_i, s_{-i})-\phi(s_i',
%s_{-i})\nnum\\
%%&= \sum_{(q_x, q_y)\in {\mathcal{Q}}}\sum_{\ell=1}^{n_{(q_x,
%%    q_y)}(s)}\frac{W_{(q_x, q_y)}}{\ell}
%%-\sum_{(q_x, q_y)\in{\mathcal{Q}}}\sum_{\ell=1}^{n_{(q_x, q_y)}(s')}\frac{W_{(q_x, q_y)}}{\ell}\nnum\\
%%&-f_i(r_i)+f_i(r_i')\nnum\\
%&= \sum_{(q_x, q_y)\in \Omega_1}\sum_{\ell=1}^{n_{(q_x, q_y)}(s)}\frac{W_{(q_x, q_y)}}{\ell}\nnum\\
%&-\sum_{(q_x, q_y)\in \Omega_1}\sum_{\ell=1}^{n_{(q_x, q_y)}(s')}\frac{W_{(q_x, q_y)}}{\ell}\nnum\\
%&-\sum_{(q_x, q_y)\in \Omega_2}\sum_{\ell=1}^{n_{(q_x, q_y)}(s)}\frac{W_{(q_x, q_y)}}{\ell}\nnum\\
%&+\sum_{(q_x, q_y)\in \Omega_2}\sum_{\ell=1}^{n_{(q_x, q_y)}(s')}\frac{W_{(q_x, q_y)}}{\ell}\nnum\\
%&-f_i(r_i)+f_i(r_i')\nnum\\
%&= \sum_{(q_x, q_y)\in \Omega_1}\frac{W_{(q_x, q_y)}}{n_{(q_x, q_y)}(s)}-\sum_{(q_x, q_y)\in \Omega_2}\frac{W_{(q_x, q_y)}}{n_{(q_x, q_y)}(s')}\nnum\\
%&-f_i(r_i)+f_i(r_i')\nnum\\
%&= u_i(s_i,s_{-i})-u_i(s_i',s_{-i})
%\end{align*}

\section{Distributed coverage learning algorithms and convergence
  results}\label{sec:algorithm}

In our coverage problem, we assume that $W_q$ is unknown in
advance. Furthermore, due to the limitations of motion and sensing,
each agent is unable to obtain the information of $W_q$ if the point
$q$ is outside its sensing range. These information constraints
renders that each agent is unable to access to the utility values
induced by alternative actions. Thus the action-based learning
algorithms; e.g., better (or best) reply learning algorithm and
adaptive play learning algorithm can not be employed to solve our
coverage games. It motivates us to design distributed learning
algorithms which only require the payoff received.

In this section, we come up with two distributed payoff-based learning
algorithms, say \emph{Distributed Inhomogeneous Synchronous Coverage
  Learning Algorithm} ($\inhomosyn$, for short) and \emph{Distributed
  Inhomogeneous Asynchronous Coverage Learning Algorithm} ($\inhomoasyn$, for
short). We then present their convergence properties. Relevant
algorithms include payoff-based learning algorithms proposed
in~\cite{JRM-JSS:08}\cite{JRM-HPY-GA-JSS:06}.

\subsection{Distributed Inhomogeneous Synchronous Coverage Learning Algorithm}

For each $t\ge 1$ and $i \in V$, we define $\tau_i(t)$ as follows: $\tau_i(t) = t$ if $u_i(s(t))\geq u_i(s(t-1))$, otherwise, $\tau_i(t) = t-1$. Here, $s_i(\tau_i(t))$ is the more successful action of agent $i$ in last two steps. The main steps of the
$\inhomosyn$ algorithm are the following:

\begin{algorithmic}[1]

  \STATE [\underline{Initialization}:] At $t=0$, all agents are
  uniformly placed in $\mathcal{Q}$. Each agent $i$ uniformly chooses
  its camera control vector $c_i$ from the set $\mathcal{C}$,
  communicates with agents in $\supscr{\NN}{sen}_i(s(0))$, and
  computes $u_i(s(0))$.  At $t = 1$, all the agents keep their
  actions.

  \STATE [\underline{Update}:] At each time $t\geq2$, each agent $i$
  updates its state according to the following rules:
  \begin{itemize}
  \item Agent $i$ chooses the exploration rate $\epsilon(t) =
    t^{-\frac{1}{N(D+1)}}$ and compute $s_i(\tau_i(t))$.
  \item With probability $\epsilon(t)$, agent $i$ experiments, and
    chooses the temporary action $\supscr{s}{tp}_i:=
    (\supscr{a}{tp}_i, \supscr{c}{tp}_i)$ uniformly from the set
    $\FF_i(a_i(t))\setminus\{s_i(\tau_i(t))\}$.
  \item With probability $1-\epsilon(t)$, agent $i$ does not
    experiment, and sets $\supscr{s}{tp}_i=s_i(\tau_i(t))$.
  \item After $\supscr{s}{tp}_i$ is chosen, agent $i$ moves to the
    position $\supscr{a}{tp}_i$ and sets the camera control vector to
    $\supscr{c}{tp}_i$.

    % i.e., $a_i(t+1)=\supscr{a}{tp}_i$ and $c_i(t+1) =
    % \supscr{c}{tp}_i$.
  \end{itemize}

  \STATE [\underline{Communication and computation}:] At position
  $\supscr{a}{tp}_i$, agent $i$ communicates with agents in
  $\supscr{\NN}{sen}_i(\supscr{s}{tp}_i, \supscr{s}{tp}_{-i})$, and
  computes $u_i(\supscr{s}{tp}_i, \supscr{s}{tp}_{-i})$ and
  $\FF_i(\supscr{a}{tp}_i)$.

  \STATE Repeat Step 2 and 3.
\end{algorithmic}

\begin{remark}
  A variation of the $\inhomosyn$ algorithm corresponds to
  $\epsilon(t)=\epsilon\in(0,\frac{1}{2}]$ constant for all
  $t\geq2$. If this is the case, we will refer to the algorithm as
  \emph{Distributed Homogeneous Synchronous Coverage Learning
    Algorithm} ($\homosyn$, for short). Later, the convergence analysis of
  the $\inhomosyn$ algorithm will be based on the analysis of the $\homosyn$
  algorithm.\oprocend\label{rem2}
\end{remark}

Denote the space ${\mathcal{B}} :=
\setdef{(s,s')\in{\mathcal{A}}\times{\mathcal{A}}}{s_i'\in{\FF}_i(a_i),\;\forall
  i\in V}$. Observe that $z(t) := (s(t-1), s(t))$ in the $\inhomosyn$
algorithm constitutes a time-inhomogeneous Markov chain
$\{{\mathcal{P}}_t\}$ on the space ${\mathcal{B}}$. The following
theorem states that the $\inhomosyn$ algorithm asymptotically converges to
the set of $\EE(\subscr{\Gamma}{cov})$ in probability.

\begin{theorem}
  Consider the Markov chain $\{{\mathcal{P}}_t\}$ induced by the $\inhomosyn$
  Algorithm. It holds that
  $\lim_{t\rightarrow+\infty}{\mathbb{P}}(z(t)\in\diag{\EE(\subscr{\Gamma}{cov})})
  = 1$.\label{the8}
\end{theorem}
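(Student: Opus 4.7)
The plan is to decompose the problem into two parts. First, I study the auxiliary homogeneous chain $\homosyn$ at a fixed exploration rate $\epsilon$ via Young's stochastic stability machinery, extracting the $\epsilon \to 0^+$ limit of its unique stationary distribution. Second, I transfer this limit to the inhomogeneous chain $\{\mathcal{P}_t\}$ via a strong-ergodicity theorem for slowly cooled Markov chains of simulated-annealing type. The overall structure mirrors the approach of \cite{JRM-JSS:08}\cite{JRM-HPY-GA-JSS:06} but has to be adapted to the two-step memory encoded in the selector $\tau_i(t)$ and to the constrained feasibility structure $\FF_i(a_i)$.

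For the homogeneous step, I would first check that for each $\epsilon \in (0,1/2]$ the kernel $P_\epsilon$ on $\mathcal{B}$ is irreducible and aperiodic, so a unique stationary distribution $\mu_\epsilon$ exists: irreducibility follows from the connectivity of $\subscr{\GG}{loc}$ (diameter $D$) together with the fact that every feasible unilateral deviation has positive probability, while aperiodicity is immediate from positive-probability self-loops at diagonal states. Next, viewing $P_\epsilon$ as a regular perturbation of the zero-noise kernel $P_0$, I would compute resistances in Young's sense, where the resistance of a transition equals the number of simultaneous experiments required to realize it. By Lemma~1 the coverage game is a constrained potential game with potential $\phi$, so along any feasible path the change in $\phi$ equals the cumulative unilateral utility changes of the deviating agents. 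This lets me argue that minimum-resistance rooted trees in the quotient of recurrence classes of $P_0$ can only be rooted at classes contained in $\diag{\EE(\subscr{\Gamma}{cov})}$: any non-equilibrium diagonal state admits a unit-resistance escape via a single profitable feasible experiment of some agent, whereas leaving a constrained NE requires at least two simultaneous mistakes in the better-of-last-two rule. Consequently $\mu_\epsilon(\diag{\EE(\subscr{\Gamma}{cov})}) \to 1$ as $\epsilon \to 0^+$. The same resistance analysis yields an upper bound $r^* \le N(D+1)$ on the largest branch resistance in a minimum tree, because any two diagonal states are connected by a path of length at most $D+1$ (up to $D$ feasible location hops plus one camera reconfiguration), each hop costing at most $N$ simultaneous mistakes.

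For the inhomogeneous step, I would invoke a strong-ergodicity theorem for singularly perturbed Markov chains: if $\sum_{t\ge 2}\epsilon(t)^{r^*} = +\infty$, then $\|{\mathbb{P}}(z(t)\in\cdot)-\mu_{\epsilon(t)}\|_{\mathrm{TV}} \to 0$. Since $\epsilon(t) = t^{-1/(N(D+1))}$ and $r^* \le N(D+1)$, we have $\sum_t \epsilon(t)^{r^*} \ge \sum_t t^{-1} = +\infty$, so the distribution of $z(t)$ asymptotically tracks $\mu_{\epsilon(t)}$. Combining with the homogeneous limit gives $\lim_{t\to\infty}{\mathbb{P}}(z(t)\in \diag{\EE(\subscr{\Gamma}{cov})}) = 1$.

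The hard part is the resistance analysis in the second step. The update rule $s_i(\tau_i(t))$ is coarser than best-response and keeps only the winner of the last two utility samples, so the recurrent classes of $P_0$ need not coincide with the classical Nash set and may include two-cycle oscillations in the joint payoff history. I will have to verify that such off-equilibrium absorbing behaviour either cannot occur once the $\tau_i$ rule and the constrained feasibility $\FF_i$ are combined, or else carries strictly greater resistance than trees rooted in $\diag{\EE(\subscr{\Gamma}{cov})}$, and simultaneously deliver the uniform bound $r^* \le N(D+1)$ that calibrates the cooling schedule. This bookkeeping, exploiting the constrained potential $\phi$ supplied by Lemma~1 while respecting the feasibility constraint set $\FF$, is where I expect the real technical work to lie.
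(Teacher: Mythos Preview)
Your two-part decomposition matches the paper's, and your homogeneous step is essentially the paper's Proposition~4.1: the resistance-tree argument (non-equilibrium diagonal states escape with resistance $1$ via a single profitable experiment, whereas leaving a constrained NE costs resistance at least $2$) is exactly what the paper does. Your worry about off-diagonal two-cycles is unfounded and easy to dispose of: under $\epsilon=0$ every agent deterministically replays the better of its last two actions, so from any $(s^0,s^1)\in\mathcal{B}\setminus\diag\Ac$ two no-experiment steps land on $(s^2,s^2)$ with $s_i^2=s_i^{\tau_i(0,1)}$; thus all recurrent classes of $P^0$ are singletons in $\diag\Ac$.

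The inhomogeneous step, however, has a genuine gap. The ``strong-ergodicity theorem for singularly perturbed Markov chains'' you invoke---divergence of $\sum_t\epsilon(t)^{r^*}$ alone forcing $\|\mathbb{P}(z(t)\in\cdot)-\mu_{\epsilon(t)}\|_{\mathrm{TV}}\to0$---is not available as a black box for this chain. A divergence condition of that type buys you only \emph{weak} ergodicity (loss of memory of the initial state). Strong ergodicity requires in addition that the one-step stationary laws settle down, namely $\sum_{t\ge0}\|\mu^t-\mu^{t+1}\|_1<\infty$. The paper verifies this separately: by the Markov-chain tree formula each coordinate $\mu_z(\epsilon)$ is a ratio of polynomials in $\epsilon$, hence its derivative has a fixed sign for all sufficiently small $\epsilon$, so $\mu_z(\epsilon(t))$ is eventually monotone and the tail of the sum telescopes. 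This polynomial/eventual-monotonicity argument is the missing idea in your proposal.

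Relatedly, your reading of $r^*$ is off. The exponent $N(D+1)$ is not ``the largest branch resistance in a minimum tree'' (those branches have resistance $1$ or $2$ by the potential-game argument you already outlined). Rather, $N$ is the maximal resistance of a \emph{single} feasible transition (all $N$ agents experiment simultaneously), and $D+1$ bounds the length of a feasible path between any two states of $\mathcal{B}$; together they give the uniform lower bound $P^{\epsilon(t)}_{z^1z^2}\ge(\epsilon(t)/(5|\mathcal{C}|-1))^N$ and hence $\min_x P_{xz}(t,t+D+1)\ge c\,\epsilon(t)^{N(D+1)}$, which is what feeds the coefficient-of-ergodicity criterion $\sum_i(1-\lambda(P(k_i,k_{i+1})))=+\infty$ for weak ergodicity. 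So the cooling schedule $\epsilon(t)=t^{-1/(N(D+1))}$ is calibrated to weak ergodicity, not to the resistance-tree geometry.
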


The proofs of Theorem~\ref{the8} are provided in Section~\ref{sec:analysis}.

\begin{remark} The $\inhomosyn$ algorithm is simpler than the payoff-based
  learning algorithm proposed in~\cite{JRM-HPY-GA-JSS:06}, reducing
  the computational complexity. The algorithm studied
  in~\cite{JRM-HPY-GA-JSS:06} converges the set of NEs with a
  arbitrarily high probability by choosing a arbitrarily small
  exploration rate $\epsilon$ in advance.  However, it is difficult to
  derive the relation between the convergent probability and the
  exploration rate. It motivates us to utilize a diminishing
  exploration rate in the $\inhomosyn$ algorithm which induces a
  time-inhomogeneous Markov chain in contrast to a time-inhomogeneous
  Markov chain in~\cite{JRM-HPY-GA-JSS:06}. This change renders a
  stronger convergence property, i.e., the convergence to the set of
  NEs in probability. \oprocend\label{rem3}
\end{remark}

\subsection{Distributed Inhomogeneous Asynchronous Coverage Learning
  Algorithm}

Lemma~\ref{lem1} shows that the coverage game $\subscr{\Gamma}{cov}$
is a constrained potential game with potential function
$\phi(s)$. However, this potential function is not a straightforward
measure of the network coverage performance. On the other hand, the
objective function $U_g(s) := \sum_{i\in V}u_i(s)$ captures the
trade-off between the overall network benefit from sensing and the
total energy the network consumes, and thus can be perceived as a more
natural coverage performance metric. Denote by $S^* := \setdef{s}{{\rm
    argmax}_{s\in{\mathcal{A}}}U_g(s)}$ as the set of global
maximizers of $U_g(s)$. In this part, we present the $\inhomoasyn$ algorithm
which is convergent in probability to the set $S^*$.

Before that , we first introduce some notations for the $\inhomoasyn$
algorithm. Denote by ${\mathcal{B}}'$ the space ${{\mathcal{B}}}' :=
\setdef{(s, s')\in{\mathcal{A}}\times{\mathcal{A}}}{s_{-i} =
  s_{-i}',\; s_i'\in {\mathcal{F}}_i(a_i)\; {\rm for\;\; some}\;\;i\in
  V}$. For any $s^0, s^1\in{\mathcal{A}}$ with $s^0_{-i}=s^1_{-i}$ for
some $i\in V$, we denote
\begin{align*}
  \Delta_i(s^1,s^0) := \frac{1}{2}\sum_{q\in\Omega_1}
  \frac{W_q}{n_q(s^1)}-\frac{1}{2}\sum_{q\in\Omega_2}\frac{W_q}{n_q(s^0)},
\end{align*}
where $\Omega_1 :=
{\mathcal{D}}(a_i^1,c_i^1)\backslash{\mathcal{D}}(a_i^0,c_i^0)\cap\QQ$
and $\Omega_2 :=
{\mathcal{D}}(a_i^0,c_i^0)\backslash{\mathcal{D}}(a_i^1,c_i^1)\cap\QQ$,
and
\begin{align*}
  &\rho_i(s^0,s^1) :=
  u_i(s^1)-\Delta_i(s^1,s^0)-u_i(s^0)+\Delta_i(s^0,s^1),\nnum\\&\Psi_i(s^0,
  s^1) := \max\{u_i(s^0)-\Delta_i(s^0,s^1),
  u_i(s^1)-\Delta_i(s^1,s^0)\},\nnum\\ &m^* :=
  \max_{(s^0,s^1)\in{\mathcal{B}},s_i^0\neq s_i^1}\{\Psi_i(s^0,
  s^1)-(u_i(s^0)-\Delta_i(s^0,s^1)),\frac{1}{2}\}.
\end{align*}
It is easy to check that $\Delta_i(s^1,s^0) = -\Delta_i(s^0,s^1)$ and
$\Psi_i(s^0, s^1) = \Psi_i(s^1, s^0)$. Assume that at each time
instant, one of agents becomes active with equal probability. Denote
by $\gamma_i(t)$ the last time instant before $t$ when agent $i$ was
active. We then denote $\gamma^{(2)}_i(t) :=
\gamma_i(\gamma_i(t))$. The main steps of the $\inhomoasyn$ algorithm are
described in the following.

\begin{algorithmic}[1]

  \STATE [\underline{Initialization}:] At $t=0$, all agents are
  uniformly placed in $\QQ$. Each agent $i$ uniformly chooses the
  camera control vector $c_i$ from the set $\CC$, and then
  communicates with agents in $\supscr{\NN}{sen}_i(s(0))$ and computes
  $u_i(s(0))$. Furthermore, each agent $i$ chooses $m_i \in (2m^*,
  Km^*]$ for some $K\geq2$.  At $t = 1$, all the sensors keep their
  actions.

  \STATE [\underline{Update}:] Assume that agent $i$ is active at time
  $t\geq2$. Then agent $i$ updates its state according to the
  following rules:

  $\bullet$ Agent $i$ chooses the exploration rate $\epsilon(t) =
  t^{-\frac{1}{(D+1)(K+1)m^*}}$.

  $\bullet$ With probability $\epsilon(t)^{m_i}$, agent $i$
  experiments and uniformly chooses $\supscr{s}{tp}_i :=
  (\supscr{a}{tp}_i, \supscr{c}{tp}_i)$ from the action set
  $\FF_i(a_i(t))\setminus\{s_i(t), s_i(\gamma_i^{(2)}(t)+1)\}$.

  $\bullet$ With probability $1-\epsilon(t)^{m_i}$, agent $i$ does not
  experiment and chooses $\supscr{s}{tp}_i$ according to the following
  probability distribution:
  \begin{align*}
    &\mathbb{P}(\supscr{s}{tp}_i = s_i(t)) =
    \frac{1}{1+\epsilon(t)^{\rho_i(s_i(\gamma_i^{(2)}(t)+1), s_i(t))}},\nnum\\
    &\mathbb{P}(\supscr{s}{tp}_i = s_i(\gamma_i^{(2)}(t)+1)) =
    \frac{\epsilon(t)^{\rho_i(s_i(\gamma_i^{(2)}(t)+1),
        s_i(t))}}{1+\epsilon(t)^{\rho_i(s_i(\gamma_i^{(2)}(t)+1),
        s_i(t))}}.
  \end{align*}

  $\bullet$ After $\supscr{s}{tp}_i$ is chosen, agent $i$ moves to the
  position $\supscr{a}{tp}_i$ and sets its camera control vector to be
  $\supscr{c}{tp}_i$.

  \STATE [\underline{Communication and computation}:] At position
  $\supscr{a}{tp}_i$, the active agent $i$ communicates with agents in
  $\supscr{\NN}{sen}_i(\supscr{s}{tp}_i,s_{-i}(t))$, and computes
  $u_i(\supscr{s}{tp}_i,s_{-i}(t))$,
  $\Delta_i((\supscr{s}{tp}_i,s_{-i}(t)),s(\gamma_i(t)+1))$,
  $\FF_i(\supscr{a}{tp}_i)$.

  \STATE Repeat Step 2 and 3.
\end{algorithmic}

\begin{remark}
  A variation of the $\inhomoasyn$ algorithm corresponds to
  $\epsilon(t)=\epsilon\in(0,\frac{1}{2}]$ constant for all
  $t\geq2$. If this is the case, we will refer to the algorithm as the
  \emph{Distributed Homogeneous Asynchronous Coverage Learning
    Algorithm} ($\homoasyn$, for short). Later, we will base the
  convergence analysis of the $\inhomoasyn$ algorithm on that of the $\homoasyn$
  algorithm.\oprocend
  \label{rem4}\end{remark}

Like the $\inhomosyn$ algorithm, $z(t) := (s(t-1), s(t))$ in the $\inhomoasyn$
algorithm constitutes a time-inhomogeneous Markov chain
$\{{\mathcal{P}}_t\}$ on the space ${\mathcal{B}}'$. The following
theorem states that the convergence property of the $\inhomoasyn$ algorithm.

\begin{theorem} Consider the Markov chain $\{{\mathcal{P}}_t\}$
  induced by the $\inhomoasyn$ algorithm for the game
  $\subscr{\Gamma}{cov}$. Then it holds that
  $\lim_{t\rightarrow+\infty}{\mathbb{P}}(z(t)\in\diag{S^*}) =
  1$.\label{the2}
\end{theorem}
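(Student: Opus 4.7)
The proof will follow the two-stage template used for Theorem~\ref{the8} (as hinted at in Remark~\ref{rem4}): I first analyze the time-homogeneous companion chain $\homoasyn$ at a fixed exploration rate $\epsilon\in(0,\tfrac12]$, identify its stochastically stable states via resistance-tree techniques, and then transfer the conclusion to the inhomogeneous $\inhomoasyn$ chain by a cooling-schedule argument that uses $\epsilon(t)=t^{-1/((D+1)(K+1)m^*)}$.

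For the homogeneous step, the plan is to show that the chain $\{z(t)\}=\{(s(t-1),s(t))\}$ on $\mathcal{B}'$ is irreducible and aperiodic for every $\epsilon\in(0,\tfrac12]$. Irreducibility follows from the facts that (i) every agent becomes active with positive probability in any round, (ii) the exploration term $\epsilon(t)^{m_i}$ reaches every feasible neighbor in $\FF_i(a_i)$, and (iii) the location graph $\subscr{\GG}{loc}$ is connected with diameter $D$, so any two joint configurations can be joined by a feasible path of length at most $N(D+1)$. Aperiodicity is immediate from the self-loop probability at each state. Hence there is a unique stationary distribution $\mu_\epsilon$, and one may define the resistance of a transition $s\to s'$ (in which only agent $i$ moves) as the exponent of $\epsilon$ in its one-step probability, namely $m_i$ for an experiment and $\max\{0,\rho_i(s_i,s_i')\}$ for a non-experimenting move weighted by the Gibbs-like rule.

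The heart of the proof is to compute the minimum stochastic potential (in the sense of Young's resistance trees) and to show that it is attained exactly on $\diag{S^*}$. The key algebraic observation is that $\Delta_i$ was designed so that, for any unilateral deviation of agent $i$ from $s^0$ to $s^1$, the quantity $u_i(s^1)-\Delta_i(s^1,s^0)-u_i(s^0)+\Delta_i(s^0,s^1)$ equals the change $U_g(s^1)-U_g(s^0)$ in the global objective. Consequently, $\rho_i(s^0,s^1)=-(U_g(s^1)-U_g(s^0))$, and along any feasible path the resistances telescope so that the total resistance of a path depends only on the endpoints and on the drops in $U_g$ it traverses. A standard tree-surgery argument (as in~\cite{JRM-HPY-GA-JSS:06} adapted to the constrained action sets via $\FF$) then shows that a resistance tree rooted at $s^*\in\diag{S^*}$ has strictly smaller weight than one rooted elsewhere; the constant $m^*$ is chosen precisely large enough that the experimentation cost dominates any savings from re-routing through a non-maximizer. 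Hence the stochastically stable set of $\homoasyn$ is $\diag{S^*}$, i.e.\ $\lim_{\epsilon\to0^+}\mu_\epsilon(\diag{S^*})=1$.

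The second stage passes from the family $\{\mu_\epsilon\}$ to the inhomogeneous chain. The idea is the standard one for simulated-annealing-type dynamics: bound from below the probability of any feasible path of length at most $N(D+1)$ by a quantity of order $\epsilon(t)^{(D+1)(K+1)m^*}$, and combine this with $\sum_t\epsilon(t)^{(D+1)(K+1)m^*}=\sum_t t^{-1}=+\infty$ to obtain weak ergodicity of $\{\mathcal{P}_t\}$ (via, e.g., Dobrushin's coefficient); then invoke strong ergodicity together with the fact that $\mu_{\epsilon(t)}$ converges to a distribution supported on $\diag{S^*}$. The main obstacle, and the step I expect to require the most care, is the resistance-tree computation: one must keep the interdependence correction $\Delta_i$ tightly coupled with the Gibbs ratios, verify that $m^*$ as defined really exceeds every one-step deviation cost, and carry out the edge-swap argument uniformly over the constrained transition structure $\mathcal{B}'$ rather than the full product space. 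Once that combinatorial estimate is in hand, the inhomogeneous convergence $\mathbb{P}(z(t)\in\diag{S^*})\to1$ follows by a routine application of the weak/strong ergodicity machinery used in the analysis of Theorem~\ref{the8}.
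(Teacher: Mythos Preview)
Your proposal is essentially correct and follows the same two-stage route as the paper: first identify the stochastically stable set of the homogeneous $\homoasyn$ chain via resistance trees (Proposition~\ref{pro5}), then upgrade to the inhomogeneous $\inhomoasyn$ chain by verifying weak and strong ergodicity with the given cooling schedule (Theorem~\ref{the5} and Theorem~\ref{the7}).

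Two small points are worth tightening. First, there is a sign slip: since $\rho_i(s^0,s^1)$ is by definition the quantity $u_i(s^1)-\Delta_i(s^1,s^0)-u_i(s^0)+\Delta_i(s^0,s^1)$, your own computation gives $\rho_i(s^0,s^1)=U_g(s^1)-U_g(s^0)$, not its negative; the paper uses exactly this identity in the path-reversal step. Second, the phrase ``$m^*$ is chosen precisely large enough that the experimentation cost dominates'' undersells a structural step that the paper makes explicit: the condition $m_i>2m^*$ is used first to prove that every edge of the minimum resistance tree over $\diag{\Ac}$ corresponds to a \emph{single} feasible unilateral deviation (the paper's Claims~9 and~10). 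Only once this single-deviator structure is established does the telescoping of $U_g$ along the reversed path yield the contradiction that pins the root to $\diag{S^*}$. You should plan to carry out this reduction explicitly rather than absorb it into ``standard tree surgery.''
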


The proofs of Theorem~\ref{the2} are provided in Section~\ref{sec:analysis}.

\begin{remark} The authors in~\cite{JRM-JSS:08} proposed a synchronous
  payoff-based log-linear learning algorithm. This algorithm is able
  to maximize the potential function of a potential game. While the
  $\inhomoasyn$ algorithm is a variation of that in~\cite{JRM-JSS:08}, and optimizes a different function $U_g(s)$. Furthermore, the convergence of the $\inhomoasyn$ algorithm is in probability and stronger than the arbitrarily high
  probability~\cite{JRM-JSS:08} by choosing an arbitrarily small
  exploration rate in advance.\oprocend\label{rem5}\end{remark}

\section{Convergence Analysis}\label{sec:analysis}

In this section, we prove Theorem~\ref{the8} and~\ref{the2} by
appealing to the Theory of Resistance Trees in~\cite{HPY:93} and the
results in strong ergodicity in~\cite{DI-RM:76}. Relevant papers
include~\cite{JRM-JSS:08}\cite{JRM-HPY-GA-JSS:06} where the Theory of
Resistance Trees in~\cite{HPY:93} is first utilized to study the class
of payoff-based learning algorithms, and~\cite{BG:85}\cite{SA-AF:87}\cite{DM-FR-ASV:86} where the strong ergodicity theory is employed to characterize the convergence properties of time-inhomogeneous Markov chains.

\subsection{Convergence analysis of the $\inhomosyn$ Algorithm}

We first utilize Theorem~\ref{the4} to characterize the convergence
properties of the associated $\homosyn$ algorithm. This is essential for
the analysis of the $\inhomosyn$ algorithm.

Observe that $z(t):=(s(t-1), s(t))$ in the $\homosyn$ algorithm constitutes
a time-homogeneous Markov chain $\{{\mathcal{P}}^{\epsilon}_t\}$ on
the space ${\mathcal{B}}$. Consider $z, z'\in {\mathcal{B}}$. A
feasible path from $z$ to $z'$ consisting of multiple feasible
transitions of $\{{\mathcal{P}}^{\epsilon}_t\}$ is denoted by $z
\Rightarrow z'$. The reachable set from $z$ is denoted as $\diamond z
:= \{z' \in {\mathcal{B}} \;|\; z \Rightarrow z'\}$.

\begin{lemma}
  $\{{\mathcal{P}}^{\epsilon}_t\}$ is a regular perturbation of
  $\{{\mathcal{P}}^0_t\}$.\label{lem2}
\end{lemma}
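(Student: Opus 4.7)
The plan is to verify the three defining conditions of a regular perturbation (in the sense of Young~\cite{HPY:93}): (i) ergodicity of $\mathcal{P}^{\epsilon}_t$ on $\mathcal{B}$ for every $\epsilon\in(0,\tfrac12]$; (ii) pointwise convergence of $\mathcal{P}^{\epsilon}_t$ to $\mathcal{P}^{0}_t$ as $\epsilon\to 0^+$; and (iii) existence of a well-defined resistance $r(z,z')\in[0,N]$ for every feasible one-step transition, i.e.\ $0<\lim_{\epsilon\to 0^+}\epsilon^{-r(z,z')}\mathcal{P}^{\epsilon}_t(z,z')<+\infty$.

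I would first write down explicitly the one-step transition probability from $z=(s(t-1),s(t))$ to $z'=(s(t),s(t+1))$ in $\mathcal{B}$. Because the synchronous update is carried out independently by the agents conditional on $z$, the transition factorizes over $V$, and for each agent $i$ exactly two cases occur: either $s_i(t+1)=s_i(\tau_i(t))$, contributing a factor $(1-\epsilon)$, or $s_i(t+1)\neq s_i(\tau_i(t))$ and lies in $\FF_i(a_i(t))\setminus\{s_i(\tau_i(t))\}$, contributing a factor $\epsilon/|\FF_i(a_i(t))\setminus\{s_i(\tau_i(t))\}|$. Letting $n(z,z'):=|\{i\in V:s_i(t+1)\neq s_i(\tau_i(t))\}|$ be the number of experimenting agents, one immediately obtains the closed form $\mathcal{P}^{\epsilon}_t(z,z')=\epsilon^{n(z,z')}(1-\epsilon)^{N-n(z,z')}\prod_{i\text{ exp.}}|\FF_i(a_i(t))\setminus\{s_i(\tau_i(t))\}|^{-1}$ on every feasible pair, and $0$ otherwise. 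Conditions (ii) and (iii) then follow by inspection with $r(z,z')=n(z,z')$, since the denominator terms are positive constants that do not depend on $\epsilon$ and $(1-\epsilon)^{N-n(z,z')}\to 1$.

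The bulk of the work lies in (i). For aperiodicity I would exhibit, at every state $z=(s,s')\in\mathcal{B}$, a positive-probability self-loop $z\to z$: this requires $s'_i=s_i$ for every $i$, which is handled by first checking that the diagonal $\{(s,s)\in\mathcal{B}\}$ is reachable (see below), and on the diagonal $s_i(\tau_i(t))=s_i$, so the no-experiment branch returns to the same state with probability $\prod_i(1-\epsilon)$. For irreducibility I would show that any $z=(s^0,s^1)\in\mathcal{B}$ can reach any $z'=(\hat s^0,\hat s^1)\in\mathcal{B}$ along a chain of one-step feasible transitions, each having positive probability for $\epsilon\in(0,\tfrac12]$. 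The route is: first drive $s^1$ to a common ``rest'' configuration by iterated experimentation---each agent may choose to move to a neighbouring cell in $\GG_{\text{loc}}$ or change its camera vector in $\mathcal{C}$, so by connectedness and diameter $D$ of $\GG_{\text{loc}}$ any location in $\QQ$ is reachable in at most $D$ feasible one-step transitions, and any camera vector in $\mathcal{C}$ is reachable in one step---then reach $\hat s^0$ and finally perform the admissible transition $\hat s^0\to\hat s^1$, which is feasible in $\mathcal{B}$ by hypothesis.

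The main obstacle I foresee is the bookkeeping in the irreducibility argument, specifically ensuring that the intermediate states visited along the constructed path do in fact lie in $\mathcal{B}$ and that the $\tau_i$-mechanism does not lock an agent into an undesirable action. The key observation that unblocks this is that at every step each agent is free to either remain put (with probability $(1-\epsilon)$, regardless of $\tau_i$) or to explore any neighbour via the experimentation branch; combined with the fact that $\FF_i(a_i)$ always contains $\{a_i\}\times\CC$ and the one-hop neighbours of $a_i$ in $\GG_{\text{loc}}$, this guarantees that every coordinate can be steered independently. Once these three conditions are in place, Lemma~\ref{lem2} follows directly from the definition of a regular perturbation.
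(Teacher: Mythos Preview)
Your proposal is correct and mirrors the paper's proof: both write the one-step transition probability as a product over agents, identify the resistance with the number of experimenters $n(z,z')=|\Lambda_2|\in\{0,\dots,N\}$, and obtain irreducibility and aperiodicity from the connectedness of $\subscr{\GG}{loc}$. Two minor remarks: your parenthetical ``remain put with probability $(1-\epsilon)$, regardless of $\tau_i$'' is inaccurate when $\tau_i(t)=t-1$ (the non-experiment branch then plays $s_i(t-1)$, not $s_i(t)$), though irreducibility is unaffected since $s_i(t)$ is still reachable via the experimentation branch; and for aperiodicity the paper argues directly at every off-diagonal state $(s^0,s^1)$ by exhibiting return cycles of coprime lengths $2$ and $3$, rather than relying (as you implicitly do) on the class property of the period together with the self-loop on diagonal states.
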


\begin{proof}
  Consider a feasible transition $z^1 \rightarrow z^2$ with
  $z^1:=(s^0,s^1)$ and $z^2:=(s^1,s^2)$. Then we can define a
  partition of $V$ as $\Lambda_1:=\setdef{i\in V}{s^2_i =
    s_i^{\tau_i(0, 1)}}$ and $\Lambda_2:=\setdef{i\in V}{s^2_i \in
    {\FF}_i(a^1_i)\setminus\{s^{\tau_i(0, 1)}_i\}}$. The corresponding
  probability is given by
\begin{align}
  P^{\epsilon}_{z^1 z^2} = \prod_{i\in
    \Lambda_1}(1-\epsilon)\times\prod_{j\in
    \Lambda_2}\frac{\epsilon}{|\FF_i(a^1_i)|-1}.\label{e7}
\end{align}

Hence, the resistance of the transition $z^1 \rightarrow z^2$ is
$|\Lambda_2|\in\{0,1,\cdots,N\}$ since \begin{align*}
  0<\lim_{\epsilon\rightarrow 0^+} \frac{P^{\epsilon}_{z^1
      z^2}}{\epsilon^{|\Lambda_2|}}=\prod_{j\in
    \Lambda_2}\frac{1}{|\FF_i(a^1_i)|-1}<+\infty.\end{align*}

We have that (A3) in Section~\ref{sec:resistancetrees} holds. It is
not difficult to see that (A2) holds, and we are now in a position to
verify (A1). Since $\subscr{\GG}{loc}$ is undirected and connected,
and multiple sensors can stay in the same position, then $\diamond a^0
= {\mathcal{Q}}^N$ for any $a^0\in {\mathcal{Q}}$. Since sensor $i$
can choose any camera control vector from $\mathcal{C}$ at each time, then $\diamond s^0 =
{\mathcal{A}}$ for any $s^0\in{\mathcal{A}}$. It implies that
$\diamond z^0 = {\mathcal{B}}$ for any $z^0 \in {\mathcal{B}}$, and
thus the Markov chain $\{{\mathcal{P}}^{\epsilon}_t\}$ is irreducible
on the space $\mathcal{B}$.

It is easy to see that any state in $\diag{\Ac}$ has period $1$. Pick
any $(s^0, s^1)\in{\mathcal{B}}\setminus\diag{\Ac}$. Since
$\subscr{\mathcal{G}}{loc}$ is undirected, then $s_i^0\in
{\FF}_i(a_i^1)$ if and only if $s_i^1\in {\FF}_i(a_i^0)$. Hence, the
following two paths are both feasible:
\begin{align*}
  &(s^0, s^1)\rightarrow (s^1, s^0) \rightarrow (s^0, s^1)\nnum\\
  &(s^0, s^1)\rightarrow (s^1, s^1) \rightarrow (s^1, s^0) \rightarrow
  (s^0, s^1).
\end{align*}
Hence, the period of the state $(s^0,s^1)$ is $1$. This proves
aperiodicity of $\{{\mathcal{P}}^{\epsilon}_t\}$. Since
$\{{\mathcal{P}}^{\epsilon}_t\}$ is irreducible and aperiodic, then
(A1) holds.
\end{proof}

\begin{lemma} For any $(s^0, s^0)\in
  \diag{\Ac}\setminus\diag{\EE(\subscr{\Gamma}{cov})}$, there is a
  finite sequence of transitions from $(s^0,s^0)$ to some $(s^*,
  s^*)\in\diag{\EE(\subscr{\Gamma}{cov})}$ that
  satisfies
  \begin{align*}
    &{\mathcal{L}}:=(s^0, s^0)\stackrel{O(\epsilon)}{\rightarrow}
    (s^0, s^1)\stackrel{O(1)}{\rightarrow} (s^1,
    s^1)\stackrel{O(\epsilon)}{\rightarrow}(s^1,
    s^2)\nnum\\&\stackrel{O(1)}{\rightarrow} (s^2,
    s^2)\stackrel{O(\epsilon)}{\rightarrow}
    \cdots\stackrel{O(\epsilon)}{\rightarrow}(s^{k-1},s^k)\stackrel{O(1)}{\rightarrow}(s^k,s^k)
  \end{align*}
  where $(s^k,s^k)=(s^*, s^*)$ for some
  $k\geq1$.\label{lem5}
\end{lemma}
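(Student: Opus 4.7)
My plan is to build the sequence $s^0, s^1, \ldots, s^k = s^*$ inductively, exploiting at each step the fact that $s^k$ fails to be a constrained NE to identify a single profitable unilateral deviation. Each iteration of the construction will consist of exactly two transitions of the chain $\{\mathcal{P}^{\epsilon}_t\}$: an $O(\epsilon)$ experimentation step followed by an $O(1)$ no-experimentation step, both of the form demanded by the sequence $\mathcal{L}$. Termination will be handed to the potential function $\phi$ from Lemma~\ref{lem1}.

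For the inductive step, if $s^k$ is not a constrained NE, then by Definition~\ref{def1} there exist $i_k \in V$ and $s_{i_k}^{k+1} \in F_{i_k}(s_{i_k}^k, s_{-i_k}^k)\setminus\{s_{i_k}^k\}$ with $u_{i_k}(s_{i_k}^{k+1}, s_{-i_k}^k) > u_{i_k}(s^k)$; I set $s^{k+1} := (s_{i_k}^{k+1}, s_{-i_k}^k)$. For the $O(\epsilon)$ transition $(s^k, s^k) \to (s^k, s^{k+1})$, I would note that $s(t-1) = s(t) = s^k$ forces $\tau_i(t) = t$ and $s_i(\tau_i(t)) = s_i^k$ for every $i\in V$, so the event in which only agent $i_k$ experiments (picking exactly $s_{i_k}^{k+1}$) and every other agent does not experiment has probability $(1-\epsilon)^{N-1}\cdot \epsilon/(|F_{i_k}(a_{i_k}^k)|-1) = O(\epsilon)$, via formula~\eqref{e7}. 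For the subsequent $O(1)$ transition to $(s^{k+1}, s^{k+1})$, I would argue that when no agent experiments (an event of probability $(1-\epsilon)^N = O(1)$) the chain moves deterministically to $(s^{k+1}, s^{k+1})$: the strict inequality $u_{i_k}(s^{k+1}) > u_{i_k}(s^k)$ pins down $\tau_{i_k} = t+1$ and hence $s_{i_k}(\tau_{i_k}) = s_{i_k}^{k+1}$, while for every $j \neq i_k$ the equality $s_j^k = s_j^{k+1}$ makes the value of $\tau_j$ irrelevant.

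To close the argument I will invoke Lemma~\ref{lem1}: along the constructed sequence one has $\phi(s^{k+1}) - \phi(s^k) = u_{i_k}(s^{k+1}) - u_{i_k}(s^k) > 0$, so the values $\phi(s^k)$ strictly increase. Since $\mathcal{A}$ is finite, this can happen only finitely often, and the sequence must terminate at some $s^*$ admitting no profitable unilateral deviation---that is, at a constrained NE. The step I anticipate as the most delicate is the claim that the $O(1)$ transition from $(s^k, s^{k+1})$ lands \emph{deterministically} at $(s^{k+1}, s^{k+1})$: it requires a simultaneous handling of $\tau_i$ for deviating and non-deviating agents, keeping in mind that the non-deviating agents' utilities may well change even though their own actions do not, so one must carefully verify that their selected action under the no-experimentation rule is still $s_j^{k+1}$.
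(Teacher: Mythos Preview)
Your proposal is correct and follows essentially the same route as the paper: both construct the improvement path by repeatedly picking a profitable unilateral deviation, compute the two transition probabilities as $(1-\epsilon)^{N-1}\epsilon/(|\FF_i(a_i^k)|-1)$ and $(1-\epsilon)^N$, and terminate via strict increase of the potential $\phi$ on the finite set $\mathcal{A}$. Your explicit handling of the non-deviating agents in the $O(1)$ step (noting that $s_j^k=s_j^{k+1}$ makes the value of $\tau_j$ immaterial) is in fact slightly more careful than the paper, which simply invokes $s_{-i}^0=s_{-i}^1$ without spelling this out.
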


\begin{proof}
  If $s^0\notin{\mathcal{E}}(\subscr{\Gamma}{cov})$, there exists a
  sensor $i$ with a action $s_i^1\in{\FF}_i(a_i^0)$ such that
  $u_i(s^1) > u_i(s^0)$ where $s_{-i}^0 = s_{-i}^1$. The transition
  $(s^0, s^0) \rightarrow (s^0, s^1)$ happens when only sensor $i$
  experiments, and its corresponding probability is
  $(1-\epsilon)^{N-1}\times\frac{\epsilon}{|{\FF}_i(a_i^0)|-1}$. Since
  the function $\phi$ is the potential function of the game
  $\subscr{\Gamma}{cov}$, then we have that $\phi(s^1) - \phi(s^0) =
  u_i(s^1) - u_i(s^0)$ and thus $\phi(s^1) > \phi(s^0)$.

  Since $u_i(s^1) > u_i(s^0)$ and $s_{-i}^0 = s_{-i}^1$, the
  transition $(s^0, s^1) \rightarrow (s^1, s^1)$ occurs when all
  sensors do not experiment, and the associated probability is
  $(1-\epsilon)^N$.

  We repeat the above process and construct the path $\mathcal{L}$
  with length $k\geq1$. Since $\phi(s^i) > \phi(s^{i-1})$ for $i
  =\until{k}$, then
  $s^i\neq s^j$ for $i\neq j$ and thus the path $\mathcal{L}$ has no
  loop. Since $\mathcal{A}$ is finite, then $k$ is finite and thus
  $s^k=s^*\in{\mathcal{E}}(\subscr{\Gamma}{cov})$.
  % , and we get the path $\mathcal{L}$.
\end{proof}

% \begin{remark} From the proof of Lemma~\ref{lem5}, we can see that
%   for any state, there is a path with finite length staring from
%   this state and ending up with some constrained NE. Furthermore, the
%   potential function $\phi$ strictly increases along this
%   path.\label{rem1}
%\end{remark}

A direct result of Lemma~\ref{lem2} is that for each $\epsilon$, there
exists a unique stationary distribution of
$\{{\mathcal{P}}^{\epsilon}_t\}$, say $\mu(\epsilon)$. We now proceed
to utilize Theorem~\ref{the4} to characterize
$\lim_{\epsilon\rightarrow0^+}\mu(\epsilon)$.

\begin{proposition} Consider the regular perturbation
  $\{{\mathcal{P}}^{\epsilon}_t\}$ of $\{{\mathcal{P}}^0_t\}$. Then
  $\displaystyle{\lim_{\epsilon\rightarrow0^+}\mu(\epsilon)}$ exists and the limiting
  distribution $\mu(0)$ is a stationary distribution of
  $\{{\mathcal{P}}^0_t\}$. Furthermore, the stochastically stable
  states (i.e., the support of $\mu(0)$) are contained in the set
  $\diag{\EE(\subscr{\Gamma}{cov})}$.\label{pro4}
\end{proposition}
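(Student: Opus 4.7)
The plan is to apply the Theory of Resistance Trees (Theorem~\ref{the4}) using the regular perturbation established in Lemma~\ref{lem2}. This immediately yields the existence of $\mu(0):=\lim_{\epsilon\to 0^+}\mu(\epsilon)$, its stationarity under $\{\mathcal{P}^0_t\}$, and the fact that the support of $\mu(0)$ (the stochastically stable states) is contained in the union of recurrent classes of $\{\mathcal{P}^0_t\}$ of minimum stochastic potential $\gamma(\cdot)$. What remains is to identify these recurrent classes and to show that the minimum-$\gamma$ ones lie in $\diag{\EE(\subscr{\Gamma}{cov})}$.

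I would first show that the recurrent classes of $\{\mathcal{P}^0_t\}$ are precisely the singletons in $\diag{\Ac}$. Each $(s^0,s^0)\in\diag{\Ac}$ is absorbing since $u_i(s^0)=u_i(s^0)$ forces $\tau_i(t)=t$ and $s_i(\tau_i(t))=s^0_i$. For any $(s^0,s^1)\in\mathcal{B}\setminus\diag{\Ac}$, combining the deterministic best-of-last-two update rule with the potential $\phi$ from Lemma~\ref{lem1} shows that $\phi(s(t))$ is non-decreasing along unperturbed trajectories and strictly increases whenever a sensor switches its action; since $\mathcal{A}$ is finite, the chain reaches $\diag{\Ac}$ in a bounded number of steps, making all non-diagonal states transient.

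The main step is to prove $\gamma(z)>\gamma(z^*)$ for every $z=(s^0,s^0)\in\diag{\Ac}\setminus\diag{\EE(\subscr{\Gamma}{cov})}$ and some $z^*=(s^*,s^*)\in\diag{\EE(\subscr{\Gamma}{cov})}$, via a tree-surgery argument on the coarse-grained graph on $\diag{\Ac}$ whose edge weights are the minimum resistances between recurrent classes. Fix a minimum-resistance arborescence $\mathcal{T}_z$ rooted at $z$, so $r(\mathcal{T}_z)=\gamma(z)$. By Lemma~\ref{lem5} there is a path $\mathcal{L}:z\Rightarrow z^*$ through distinct diagonal states $(s^1,s^1),\dots,(s^k,s^k)=z^*$, built from $k$ single-sensor experimentations of resistance $1$ interleaved with $k$ zero-resistance best-response transitions, so the total resistance equals exactly $k$. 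I would construct $\mathcal{T}_{z^*}$ by deleting the outgoing edges of $z^*,(s^1,s^1),\dots,(s^{k-1},s^{k-1})$ from $\mathcal{T}_z$ and inserting the $k$ coarse-level edges of $\mathcal{L}$; the distinctness of the $s^i$ ensures that the resulting graph is a valid spanning arborescence rooted at $z^*$.

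The main obstacle is the strict resistance accounting in the surgery. The inserted edges contribute exactly $k$ units, while every deleted edge exits an absorbing state and hence has resistance at least $1$, which by itself yields only $\gamma(z^*)\le\gamma(z)$. To upgrade to a strict inequality, I would argue that the outgoing edge of $z^*$ in $\mathcal{T}_z$ has resistance at least $2$: tracing the unperturbed dynamics after a single-sensor experimentation from $z^*=(s^*,s^*)$ produces a transient $(s^*,\tilde{s})$ with $\tilde{s}_{-i}=s^*_{-i}$, and the NE condition $u_i(s^*)\ge u_i(\tilde{s})$ combined with the best-of-last-two rule forces every sensor's $\mathcal{P}^0$-update to revert to $s^*_j$, so the chain returns to $z^*$ within a uniformly bounded number of deterministic steps. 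Hence no resistance-$1$ transition in the coarse graph can exit $z^*$; accounting for this extra unit then gives $r(\mathcal{T}_{z^*})\le r(\mathcal{T}_z)+k-(k+1)=\gamma(z)-1<\gamma(z)$, excluding $z$ from the support of $\mu(0)$ and completing the proof.
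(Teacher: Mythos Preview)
Your overall strategy mirrors the paper's: identify the recurrent classes of $\{\mathcal{P}^0_t\}$ as the diagonal singletons, then run a tree-surgery argument (via Lemma~\ref{lem5}) together with a resistance-$2$ lower bound for edges leaving constrained-NE diagonal states. The surgery and the resistance-$2$ step are essentially what the paper does in its Claims~3 and~4.

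However, your justification that non-diagonal states are transient is flawed. You assert that ``$\phi(s(t))$ is non-decreasing along unperturbed trajectories and strictly increases whenever a sensor switches its action,'' invoking Lemma~\ref{lem1}. But Lemma~\ref{lem1} only equates $\phi$-differences with $u_i$-differences for \emph{unilateral} deviations. The unperturbed $\homosyn$ dynamics is \emph{synchronous}: from $(s^0,s^1)$, every sensor $i$ independently sets $s_i^2=s_i^{\tau_i(0,1)}$ based on its own comparison $u_i(s^1)\gtrless u_i(s^0)$, so several sensors may switch simultaneously. There is no reason that $\phi(s^2)\ge\phi(s^1)$ in general; already with two sensors, if sensor $1$ prefers $s^0$ and sensor $2$ prefers $s^1$, then $s^2=(s_1^0,s_2^1)$ is a hybrid profile and $\phi(s^2)$ is uncontrolled. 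Your Lyapunov argument therefore does not establish transience of $\mathcal{B}\setminus\diag{\Ac}$, and the coarse-grained tree picture you rely on is not yet justified.

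The paper avoids this by never invoking $\phi$ for the unperturbed dynamics: it builds the minimum-resistance tree directly over all of $\mathcal{B}$ (not just over recurrent classes), restricts edges leaving non-diagonal states to have resistance zero, and shows via an explicit two-step zero-resistance path $(s^0,s^1)\to(s^1,s^2)\to(s^2,s^2)$ that the root cannot sit off the diagonal. You will need either a correct transience argument for the synchronous rule, or to switch to the paper's device of working over $\mathcal{B}$.
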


\begin{proof}
  Notice that the stochastically stable states are contained in the
  recurrent communication classes of the unperturbed Markov chain that
  corresponds to the $\homosyn$ Algorithm with $\epsilon = 0$. Thus the
  stochastically stable states are included in the set
  $\diag{\Ac}\subset {\mathcal{B}}$. Denote by $T_{\min}$ the minimum
  resistance tree and by $h_v$ the root of $T_{\min}$. Each edge of
  $T_{\min}$ has resistance $0,1,2,\dots$ corresponding to the
  transition probability $O(1),O(\epsilon),O(\epsilon^2),\dots$. The
  state $z'$ is the $successor$ of the state $z$ if and only if $(z,
  z')\in T_{\min}$. Like Theorem 3.2 in~\cite{JRM-HPY-GA-JSS:06}, our
  analysis will be slightly different from the presentation
  in~\ref{sec:resistancetrees}. We will construct $T_{\min}$ over
  states in the set $\mathcal{B}$ (rather than $\diag{\Ac}$) with the
  restriction that all the edges leaving the states in
  ${\mathcal{B}}\setminus\diag{\Ac}$ have resistance $0$. The
  stochastically stable states are not changed under this
  difference.

  \begin{claim}
    For any $(s^0,s^1)\in {\mathcal{B}}\setminus\diag{\Ac}$, there is
    a finite path
    \begin{align*}
      {\mathcal{L}}' := (s^0, s^1)\stackrel{O(1)}{\rightarrow} (s^1,
      s^2)\stackrel{O(1)}{\rightarrow} (s^2, s^2)
  \end{align*}
  where $s_i^2 = s_i^{\tau_i(0, 1)}$ for all $i\in V$.
  \end{claim}
  \begin{proof}
    These two transitions occur when all agents do not experiment. The
    corresponding probability of each transition is $(1-\epsilon)^N$.
  \end{proof}

  \begin{claim} The root $h_v$ belongs to the set $\diag{\Ac}$.\end{claim}

  \begin{proof} Suppose that $h_v=(s^0, s^1)\in
  {\mathcal{B}}\setminus\diag{\Ac}$. By Claim 1, there is a finite
  path ${\mathcal{L}}' := (s^0, s^1)\stackrel{O(1)}{\rightarrow} (s^1,
  s^2)\stackrel{O(1)}{\rightarrow} (s^2, s^2)$. We now construct a new
  tree $T'$ by adding the edges of the path ${\mathcal{L}}'$ into the
  tree $T_{\min}$ and removing the redundant edges. The total
  resistance of adding edges is $0$. Observe that the resistance of
  the removed edge exiting from $(s^2,s^2)$ in the tree $T_{\min}$ is
  at least $1$. Hence, the resistance of $T'$ is strictly lower than
  that of $T_{\min}$, and we get to a contradiction.\end{proof}

  \begin{claim} Pick any ${s}^* \in
  {\mathcal{E}}(\subscr{\Gamma}{cov})$ and consider $z := ({s}^*,
  {s}^*),$ $z' := ({s}^*,{\tilde{s}})$ where $\tilde{s}\neq s^*$. If
  $(z, z')\in T_{\min}$, then the resistance of the edge $(z, z')$ is
  some $k\geq2$.\end{claim}

  \begin{proof} Suppose the deviator in the transition $z\rightarrow
  z'$ is unique, say $i$. Then the corresponding transition
  probability is $O(\epsilon)$. Since
  $s^*\in{\mathcal{E}}(\subscr{\Gamma}{cov})$ and
  $\tilde{s}_i\in{\FF}_i(a_i^*)$, we have that $u_i(s_i^*, s_{-i}^*)
  \geq u_i(\tilde{s}_i,\tilde{s}_{-i})$, where $s_{-i}^* =
  \tilde{s}_{-i}$.

  Since $z'\in {\mathcal{B}}\setminus\diag{\Ac}$, it follows from
  Claim 2 that the state $z'$ can not be the root of $T_{\min}$ and
  thus has a successor $z''$. Note that all the edges leaving the
  states in ${\mathcal{B}}\setminus\diag{\Ac}$ have resistance
  $0$. Then none experiments in the transition $z'\rightarrow z''$ and
  $z'' = (\tilde{s}, \hat{s})$ for some $\hat{s}$. Since $u_i(s_i^*,
  s_{-i}^*) \geq u_i(\tilde{s}_i,\tilde{s}_{-i})$ with $s_{-i}^* =
  \tilde{s}_{-i}$, we have $\hat{s} = s^*$ and thus $z'' = (\tilde{s},
  s^*)$. Similarly, the state $z''$ must have a successor $z'''$ and
  $z'''=z$. We then obtain a loop in $T_{\min}$ which contradicts that
  $T_{\min}$ is a tree.

  It implies that at least two sensors experiment in the transition
  $z\rightarrow z'$. Thus the resistance of the edge $(z, z')$ is at
  least 2.\end{proof}

  \begin{claim} The root $h_v$ belongs to the set
  $\diag{\EE(\subscr{\Gamma}{cov})}$.\end{claim}

  \begin{proof} Suppose that $h_v=(s^0, s^0)\notin
  \diag{\EE(\subscr{\Gamma}{cov})}$. By Lemma~\ref{lem5}, there is a
  finite path $\mathcal{L}$ connecting $(s^0, s^0)$ and some $(s^*,
  s^*)\in\diag{\EE(\subscr{\Gamma}{cov})}$. We now construct a new
  tree $T'$ by adding the edges of the path $\mathcal{L}$ into the
  tree $T_{\min}$ and removing the edges that leave the states in
  $\mathcal{L}$ in the tree $T_{\min}$. The total resistance of adding
  edges is $k$. Observe that the resistance of the removed edge
  exiting from $(s^i,s^i)$ in the tree $T_{\min}$ is at least $1$ for
  $i\in\{1,\cdots,k-1\}$. By Claim 3, the resistance of the removed
  edge leaving from $(s^*,s^*)$ in the tree $T_{\min}$ is at least
  $2$. The total resistance of removing edges is at least
  $k+1$. Hence, the resistance of $T'$ is strictly lower than that of
  $T_{\min}$, and we get to a contradiction.\end{proof}

  It follows from Claim 4 that the states in
  $\diag{\EE(\subscr{\Gamma}{cov})}$ have minimum stochastic
  potential. Since Lemma~\ref{lem2} shows that Markov chain
  $\{{\mathcal{P}}^{\epsilon}_t\}$ is a regularly perturbed Markov
  process, Proposition~\ref{pro4} is a direct result of
  Theorem~\ref{the4}.
\end{proof}

We are now ready to show Theorem~\ref{the8}.

\textbf{Proof of Theorem~\ref{the8}:}

\begin{claim} Condition (B2) in Theorem~\ref{the5} holds.\end{claim}

\begin{proof}
For each $t\geq0$ and each $z\in X$, we defines the
numbers \begin{align*} &\sigma_z(\epsilon(t)) :=
  \sum_{T\in{G(z)}}\prod_{(x,y)\in T}P^{\epsilon (t)}_{xy},\quad
  \sigma_z^t = \sigma_z(\epsilon(t))\nnum\\ &\mu_z(\epsilon(t)) :=
  \frac{\sigma_z(\epsilon(t))}{\sum_{x\in
      X}\sigma_x(\epsilon(t))},\quad \mu_z^t =
  \mu_z(\epsilon(t)). \end{align*}

Since $\{{\mathcal{P}}^{\epsilon}_t\}$ is a regular perturbation of
$\{{\mathcal{P}}^0_t\}$, then it is irreducible and thus $\sigma_z^t >
0$. As Lemma 3.1 of Chapter 6 in~\cite{MF-AW:50}, one can show that
$(\mu^t)^TP^{\epsilon (t)} = (\mu^t)^T$. Therefore, condition (B2) in
Theorem~\ref{the5} holds.\end{proof}

\begin{claim} Condition (B3) in Theorem~\ref{the5} holds.\end{claim}

\begin{proof}
We now proceed to verify condition (B3) in Theorem~\ref{the5}. To do
that, let us first fix $t$, denote $\epsilon = \epsilon(t)$ and study
the monotonicity of $\mu_z(\epsilon)$ with respect to $\epsilon$. We
write $\sigma_z(\epsilon)$ in the following
form
\begin{align}
  \sigma_z(\epsilon) = \sum_{T\in G(z)}\prod_{(x,y)\in
    T}P^{\epsilon}_{xy} = \sum_{T\in G(z)}\prod_{(x,y)\in
    T}\frac{\alpha_{xy}(\epsilon)}{\beta_{xy}(\epsilon)} =
  \frac{\alpha_z(\epsilon)}{\beta_z(\epsilon)}\label{e6}
\end{align}
for some polynomials $\alpha_z(\epsilon)$ and $\beta_z(\epsilon)$ in
$\epsilon$. With~\eqref{e6} in hand, we have that $\sum_{x\in
  X}\sigma_x(\epsilon)$ and thus $\mu_z(\epsilon)$ are ratios of two
polynomials in $\epsilon$; i.e., $\mu_z(\epsilon) =
\frac{\varphi_z(\epsilon)}{\beta(\epsilon)}$ where
$\varphi_z(\epsilon)$ and $\beta(\epsilon)$ are polynomials in
$\epsilon$. The derivative of $\mu_z(\epsilon)$ is given by
\begin{align*}
\frac{\partial \mu_z(\epsilon)}{\partial \epsilon} =
  \frac{1}{\beta(\epsilon)^2}(\frac{\partial
    \varphi_z(\epsilon)}{\partial \epsilon}\beta(\epsilon) -
  \varphi_z(\epsilon)\frac{\partial \beta(\epsilon)}{\partial
    \epsilon}).
\end{align*}

Note that the numerator $\frac{\partial \varphi_z(\epsilon)}{\partial
  \epsilon}\beta(\epsilon) - \varphi_z(\epsilon)\frac{\partial
  \beta(\epsilon)}{\partial \epsilon}$ is a polynomial in
$\epsilon$. Denote by $\iota_z\neq0$ the coefficient of the leading
term of $\frac{\partial \varphi_z(\epsilon)}{\partial \epsilon} -
\varphi_z(\epsilon)\frac{\partial \beta(\epsilon)}{\epsilon}$. The
leading term dominates $\frac{\partial \varphi_z(\epsilon)}{\partial
  \epsilon} - \varphi_z(\epsilon)\frac{\partial
  \beta(\epsilon)}{\epsilon}$ when $\epsilon$ is sufficiently
small. Thus there exists $\epsilon_z > 0$ such that the sign of
$\frac{\partial \mu_z(\epsilon)}{\partial \epsilon}$ is the sign of
$\iota_z$ for all $0< \epsilon \leq \epsilon_z$. Let $\epsilon^* =
\max_{z\in X}\epsilon_z$.

Since $\epsilon(t)$ strictly decreases to zero, then there is a unique
finite time instant $t^*$ such that $\epsilon(t^*) = \epsilon^*$ (if
$\epsilon(0) < \epsilon^*$, then $t^* = 0$). Since $\epsilon(t)$ is
strictly decreasing, we can define a partition of $X$ as
follows:
\begin{align*}
  &\Xi_1 := \{z \in X\; |\; \mu_z(\epsilon(t)) >
  \mu_z(\epsilon(t+1)),\quad \forall t\in [t^*, +\infty)\},\nnum\\
  &\Xi_2 := \{z \in X\; |\; \mu_z(\epsilon(t)) <
  \mu_z(\epsilon(t+1)),\quad \forall t\in[t^*, +\infty)\}.
\end{align*}

We are now ready to verify (B3) of Theorem~\ref{the5}. Since
$\{{\mathcal{P}}^{\epsilon}_t\}$ is a regular perturbed Markov chain
of $\{{\mathcal{P}}^0_t\}$, it follows from Theorem~\ref{the4} that
$\lim_{t\rightarrow+\infty}\mu_z(\epsilon(t))=\mu_z(0)$, and thus it
holds that
\begin{align*}
  & \sum_{t=0}^{+\infty}\sum_{z\in X}\|\mu^t_z-\mu^{t+1}_z\| = \sum_{t=0}^{+\infty}\sum_{z\in X}|\mu_z(\epsilon(t))-\mu_z(\epsilon(t+1))|\nnum\\
  & = \sum_{t=0}^{t^*}\sum_{z\in X}|\mu_z(\epsilon(t))-\mu_z(\epsilon(t+1))| + \sum_{t=t^*+1}^{+\infty}(\sum_{z\in \Xi_1}\mu_z(\epsilon(t))-\sum_{z\in \Xi_1}\mu_z(\epsilon(t+1)))\nnum\\
  &+ \sum_{t=t^*+1}^{+\infty}(1-\sum_{z\in \Xi_1}\mu_z(\epsilon(t+1))-(1-\sum_{z\in \Xi_1}\mu_z(\epsilon(t))))\nnum\\
  &= \sum_{t=0}^{t^*}\sum_{z\in
    X}|\mu_z(\epsilon(t))-\mu_z(\epsilon(t+1))| + 2\sum_{z\in
    \Xi_1}\mu_z(\epsilon(t^*+1)) - 2\sum_{z\in \Xi_1}\mu_z(0)<+\infty.
\end{align*}\end{proof}

\begin{claim} Condition (B1) in Theorem~\ref{the5} holds.\end{claim}

\begin{proof}
Denote by $P^{\epsilon(t)}$ the transition matrix of
$\{{\mathcal{P}}_t\}$. As in~\eqref{e7}, the probability of the
feasible transition $z^1\rightarrow z^2$ is given by
\begin{align*}
  P^{\epsilon(t)}_{z^1 z^2} = \prod_{i\in
    \Lambda_1}(1-\epsilon(t))\times\prod_{j\in
    \Lambda_2}\frac{\epsilon(t)}{|\FF_i(a^1_i)|-1}.
\end{align*}
Observe that $|\FF_i(a^1_i)|\leq 5|{\mathcal{C}}|$. Since
$\epsilon(t)$ is strictly decreasing, there is $t_0\geq 1$ such that
$t_0$ is the first time when $1-\epsilon(t)\geq
\frac{\epsilon(t)}{5|{\mathcal{C}}|-1}$. Then for all $t\geq t_0$, it
holds that
\begin{align*} P^{\epsilon(t)}_{z^1 z^2} \geq
  (\frac{\epsilon(t)}{5|{\mathcal{C}}|-1})^N.
\end{align*}

% Furthermore, for all $t\geq t_0$ and all $z^1\in \Phi$, it holds
% that \begin{align*}P_{z^1z^1}^{\epsilon(t)} &= (1-\epsilon(t))^N
%   \geq (\frac{\epsilon(t)}{5r_{\max}-1})^N.\end{align*}

Denote $P(m,n) := \prod_{t=m}^{n-1}P^{\epsilon(t)}$, $0\leq m<
n$. Pick any $z\in {\mathcal{B}}$ and let $u_z\in{\mathcal{B}}$ be
such that $P_{u_z z}(t, t+D+1) = \min_{x\in {\mathcal{B}}} P_{xz}(t,
t+D+1)$. Consequently, it follows that for all $t\geq t_0$,
\begin{align*}
  & \min_{x\in {\mathcal{B}}} P_{xz}(t, t+D+1)
  = \sum_{i_1\in {\mathcal{B}}}\cdots\sum_{i_D\in\in {\mathcal{B}}}P_{u_zi_1}^{\epsilon(t)}\cdots P_{i_{D-1}i_D}^{\epsilon(t+D-1)}P_{i_D z}^{\epsilon(t+D)}\nnum\\
  &\geq P_{u_zi_1}^{\epsilon(t)}\cdots
  P_{i_{D-1}i_D}^{\epsilon(t+D-1)}P_{i_D z}^{\epsilon(t+D)}\geq
  \prod_{i=0}^D(\frac{\epsilon(t+i)}{5|{\mathcal{C}}|-1})^{N}\geq
  (\frac{\epsilon(t)}{5|{\mathcal{C}}|-1})^{(D+1)N}
\end{align*}
where in the last inequality we use $\epsilon(t)$ begin
strictly decreasing. Then we have
\begin{align*}
  &1-\lambda(P(t, t+D+1)) = \min_{x,y\in{\mathcal{B}}}\sum_{z\in{\mathcal{B}}}\min\{P_{xz}(t,t+D+1), P_{yz}(t,t+D+1)\}\nnum\\
%  &\geq \sum_{z\in{\mathcal{B}}}\min_{x\in {\mathcal{B}}} P_{xz}(t, t+D+1)\nnum\\
  &\geq \sum_{z\in{\mathcal{B}}} P_{u_zz}(t, t+D+1)\geq |{\mathcal{B}}|(\frac{\epsilon(t)}{5|{\mathcal{C}}|-1})^{(D+1)N}.
\end{align*}
Choose $k_i := (D+1)i$ and let $i_0$ be the smallest integer such that
$(D+1)i_0\geq t_0$. Then, we have that:

\begin{align}
  &\sum_{i=0}^{+\infty}(1-\lambda(P(k_i, k_{i+1})))
  \geq |{\mathcal{B}}|\sum_{i=i_0}^{+\infty}(\frac{\epsilon((D+1)i)}{5|{\mathcal{C}}|-1})^{(D+1)N}\nnum\\
  & =
  \frac{|{\mathcal{B}}|}{(5|{\mathcal{C}}|-1)^{(D+1)N}}\sum_{i=i_0}^{+\infty}\frac{1}{(D+1)i}
  = +\infty.\label{e8}
\end{align}
Hence, the weak ergodicity property follows from Theorem~\ref{the7}.\end{proof}

All the conditions in Theorem~\ref{the5} hold. Thus it follows from Theorem~\ref{the5} that
the limiting distribution is $\mu^* = \lim_{t\rightarrow+\infty}\mu^t$. Note that
$\lim_{t\rightarrow+\infty}\mu^t = \lim_{t\rightarrow+\infty}\mu(\epsilon(t)) = \mu(0)$ and
Proposition~\ref{pro4} shows that the support of $\mu(0)$ is contained
in the set $\diag{\EE(\subscr{\Gamma}{cov})}$. Hence, the support of
$\mu^*$ is contained in the set $\diag{\EE(\subscr{\Gamma}{cov})}$,
implying that $\lim_{t\rightarrow+\infty}{\mathbb{P}}(z(t)\in\diag{\EE(\subscr{\Gamma}{cov})})
= 1$. This completes the proof.

\subsection{Convergence analysis of the $\inhomoasyn$ Algorithm}

First of all, we employ Theorem~\ref{the4} to study the convergence
properties of the associated $\homoasyn$ algorithm. This is essential to
analyze the $\inhomoasyn$ algorithm.

To simplify notations, we will use $s_i(t-1) :=
s_i(\gamma^{(2)}_i(t)+1)$ in the remainder of this section. Observe
that $z(t):=(s(t-1), s(t))$ in the $\homoasyn$ algorithm constitutes a
Markov chain $\{{\mathcal{P}}^{\epsilon}_t\}$ on the space
${\mathcal{B}}'$.

\begin{lemma}
  The Markov chain $\{{\mathcal{P}}^{\epsilon}_t\}$ is a regular
  perturbation of $\{{\mathcal{P}}^0_t\}$.\label{pro2}
\end{lemma}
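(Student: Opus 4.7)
The plan is to mirror the proof of Lemma~\ref{lem2} and verify the three defining conditions (A1)--(A3) of a regular perturbation for the chain $\{\mathcal{P}^{\epsilon}_t\}$ on the state space $\mathcal{B}'$. Every feasible transition $z^1 := (s^0, s^1) \to z^2 := (s^1, s^2)$ selects a unique active agent $i$ (with probability $1/N$) whose updated action lies in $\FF_i(a^1_i)$, while $s^2_{-i} = s^1_{-i}$; the transition probability therefore factors as $1/N$ times one of the three update branches prescribed by the $\inhomoasyn$ algorithm.

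The core of the proof is condition (A3), by a three-way case split on $s^2_i$. If $s^2_i \notin \{s^1_i, s^0_i\}$, only experimentation produces the transition, which has probability $\epsilon^{m_i}/|\FF_i(a^1_i) \setminus \{s^1_i, s^0_i\}|$, yielding resistance $m_i$. If $s^2_i = s^1_i$, the non-experimentation Gibbs weight $(1-\epsilon^{m_i})/(1+\epsilon^{\rho_i(s^0,s^1)})$ has leading order $\epsilon^{\max\{0,-\rho_i(s^0,s^1)\}}$; symmetrically, if $s^2_i = s^0_i$, the weight $(1-\epsilon^{m_i})\epsilon^{\rho_i(s^0,s^1)}/(1+\epsilon^{\rho_i(s^0,s^1)})$ has leading order $\epsilon^{\max\{0,\rho_i(s^0,s^1)\}}$. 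In each case $P^{\epsilon}_{z^1 z^2}/\epsilon^{r(z^1,z^2)}$ admits a strictly positive finite limit as $\epsilon \to 0^+$, so (A3) holds. Condition (A2) is immediate, since these $\epsilon \to 0^+$ limits form a valid stochastic matrix: the Gibbs weights degenerate to $\{0,\tfrac{1}{2},1\}$-valued limits, and experimentation vanishes.

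For (A1), irreducibility holds because repeated single-agent experimentation steps let one agent at a time traverse the connected graph $\subscr{\mathcal{G}}{loc}$ and exhaust all camera settings in $\mathcal{C}$, so from any $z^1 \in \mathcal{B}'$ one constructs a feasible path to any $z^2 \in \mathcal{B}'$. Aperiodicity follows from a positive-probability self-loop at every $(s,s) \in \diag{\Ac}$: starting from $(s,s)$, one has $s_i(t+1) = s_i$ for the activated agent $i$, so the non-experimentation Gibbs branch chooses $s_i$ again with probability at least $(1-\epsilon^{m_i})/(1+\epsilon^{\rho_i(s^0,s^1)}) > 0$, keeping the state fixed. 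Since an irreducible chain with any aperiodic state is aperiodic, (A1) is established.

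The main obstacle is the case analysis in (A3): unlike in the synchronous setting, the exponent $\rho_i(s^0,s^1)$ is sign-indefinite, so the resistance of a non-experimentation transition is not read off directly but must be captured by $\max\{0,\pm\rho_i(s^0,s^1)\}$. The choice $m_i \in (2m^*, Km^*]$, combined with the bound $\max\{0,\rho_i(s^0,s^1)\} \leq m^*$ built into the definition of $m^*$, is exactly what keeps these Gibbs resistances strictly below the experimentation resistances; this separation is what guarantees the limits in (A3) are finite and strictly positive here, and it will later be the key ingredient in the stochastic-stability analysis behind Theorem~\ref{the2}.
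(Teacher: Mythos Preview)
Your proposal is correct and follows essentially the same strategy as the paper's proof: a three-way case split on $s_i^2$ to verify (A3), with (A2) immediate and (A1) handled via connectedness of $\subscr{\GG}{loc}$ for irreducibility and a period-$1$ state for aperiodicity. Your resistance values $\max\{0,\pm\rho_i(s^0,s^1)\}$ are exactly the paper's $\Psi_i(s^0,s^1)-(u_i(s^\ell)-\Delta_i(s^\ell,s^{1-\ell}))$ rewritten, and your aperiodicity argument (a self-loop at any diagonal state plus irreducibility forces aperiodicity everywhere) is a legitimate shortcut where the paper instead exhibits explicit return paths of coprime lengths $2$ and $3$ at every off-diagonal state.
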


\begin{proof}
  Pick any two states $z^1:=(s^0,s^1)$ and $z^2:=(s^1,s^2)$ with $z^1
  \neq z^2$. We have that $P^{\epsilon}_{z^1 z^2} > 0$ if and only if
  there is some $i\in V$ such that $s_{-i}^1 = s_{-i}^2$ and one of
  the following occurs: $s^2_i\in \FF_i(a_i^1)\setminus\{s^0_i,
  s^1_i\}$, $s^2_i=s^1_i$ or $s^2_i=s^0_i$. In particular, the
  following holds:
  \begin{align*}
    P^{\epsilon}_{z^1 z^2} =
    \begin{cases}
      \eta_1, \quad s^2_i\in \FF_i(a_i^1)\setminus\{s^0_i, s^1_i\},\\
      \eta_2, \quad s^2_i=s^1_i,\\
      \eta_3, \quad s^2_i=s^0_i,
    \end{cases}
  \end{align*}
  where
  \begin{align*}
    \eta_1:=\frac{\epsilon^{m_i}}{N|\FF_i(a_i^1)\setminus\{s^0_i, s^1_i\}|},\quad \eta_2:=\frac{1-\epsilon^{m_i}}{N(1+\epsilon^{\rho_i(s^0,s^1)})},\quad
    \eta_3:=\frac{(1-\epsilon^{m_i})\times\epsilon^{\rho_i(s^0,s^1)}}{N(1+\epsilon^{\rho_i(s^0,s^1)})}.
  \end{align*}

  Observe that $ 0<\lim_{\epsilon\rightarrow
    0^+}\frac{\eta_1}{\epsilon^{m_i}}<+\infty.$ Multiplying the
  numerator and denominator of $\eta_2$ by $\epsilon^{\Psi_i(s^1,
    s^0)-(u_i(s^1)-\Delta_i(s^1,s^0))}$,  we obtain
\begin{align*}
  \eta_2 = \frac{1-\epsilon^{m_i}}{N} \times
  \frac{\epsilon^{\Psi_i(s^0,
      s^1)-(u_i(s^1)-\Delta_i(s^1,s^0))}}{\eta_2'},
\end{align*}
where $\eta_2' := \epsilon^{\Psi_i(s^0,
  s^1)-(u_i(s^1)-\Delta_i(s^1,s^0))}+\epsilon^{\Psi_i(s^0,
  s^1)-(u_i(s^0)-\Delta_i(s^0,s^1))}$. Use \begin{align*} \lim_{\epsilon\rightarrow0^+}\epsilon^{x}
  = \begin{cases}
    1, & x = 0,\\
    0, & x > 0,
\end{cases}
\end{align*} and we have
\begin{align*}
  \lim_{\epsilon\rightarrow 0^+} \frac{\eta_2}{\epsilon^{\Psi_i(s^0, s^1)-(u_i(s^1)-\Delta_i(s^1,s^0))}}
  =\begin{cases}
    \frac{1}{N}, & u_i(s^0)-\Delta_i(s^0,s^1)\neq u_i(s^1)-\Delta_i(s^1,s^0),\\
    \frac{1}{2N}, & {\rm otherwise}.
  \end{cases}
\end{align*}
Similarly, it holds that
\begin{align*}
  \lim_{\epsilon\rightarrow 0^+} \frac{\eta_3}{\epsilon^{\Psi_i(s^0,
      s^1)-(u_i(s^0)-\Delta_i(s^0,s^1))}}\in\{\frac{1}{2N},
  \frac{1}{N}\}.
\end{align*}
Hence, the resistance of the feasible transition $z^1\rightarrow z^2$,
with $z^1\neq z^2$ and sensor $i$ as the unilateral deviator, can be
described as follows:
\begin{align*}
  \chi(z^1\rightarrow z^2) =
  \begin{cases}
    m_i, \quad s^2_i\in \FF_i(a^1)\setminus\{s^0_i, s^1_i\},\\
    \Psi_i(s^0, s^1)-(u_i(s^1)-\Delta_i(s^1,s^0)), & s^2_i=s^1_i,\\
    \Psi_i(s^0, s^1)-(u_i(s^0)-\Delta_i(s^0,s^1)), & s^2_i=s^0_i.
\end{cases}
\end{align*}

% We now proceed to consider the case of $s^0 = s^1$ and let sensor
% $i$ become active in the transition $s^2\rightarrow s^3$. The
% associated probability is given by
%\begin{align*}
%\hat{\mathcal{P}}^{\epsilon}_{z^1\rightarrow z^2} =
%\begin{cases}
%\eta_1, \quad s^2_i\in {\hat{\mathcal{F}}}_i(a^1)\setminus\{s^0_i, s^1_i\}\\
%1-\epsilon^m, \quad s^2 = s^1,
%\end{cases}
%\end{align*} and thus the corresponding resistance of leaving $z^1$ is $m$.

Then (A3) in Section~\ref{sec:resistancetrees} holds. It is
straightforward to verify that (A2) in
Section~\ref{sec:resistancetrees} holds. We are now in a position to
verify (A1). Since $\subscr{\GG}{loc}$ is undirected and connected,
and multiple sensors can stay in the same position, then $\diamond a^0
= {\mathcal{Q}}^N$ for any $a^0\in {\mathcal{Q}}$. Since sensor $i$
can choose any camera control vector from $\mathcal{C}$ at each time,
then $\diamond s^0 = {\mathcal{A}}$ for any
$s^0\in{\mathcal{A}}$. This implies that $\diamond z^0 =
{\mathcal{B}}'$ for any $z^0 \in {\mathcal{B}}'$, and thus the Markov
chain $\{{\mathcal{P}}^{\epsilon}_t\}$ is irreducible on the space
${\mathcal{B}}'$.

It is easy to see that any state in $\diag{\Ac}$ has period $1$. Pick
any $(s^0, s^1)\in{\mathcal{B}}'\setminus\diag{\Ac}$. Since
$\subscr{\mathcal{G}}{loc}$ is undirected, then $s_i^0\in
{\FF}_i(a_i^1)$ if and only if $s_i^1\in {\FF}_i(a_i^0)$. Hence, the
following two paths are both feasible:
\begin{align*}
  &(s^0, s^1)\rightarrow (s^1, s^0) \rightarrow (s^0, s^1)\nnum\\
  &(s^0, s^1)\rightarrow (s^1, s^1) \rightarrow (s^1, s^0) \rightarrow
  (s^0, s^1).
\end{align*}
Hence, the period of the state $(s^0,s^1)$ is $1$. This proves
aperiodicity of $\{{\mathcal{P}}^{\epsilon}_t\}$. Since
$\{{\mathcal{P}}^{\epsilon}_t\}$ is irreducible and aperiodic, then
(A1) holds.
\end{proof}

A direct result of Lemma~\ref{pro2} is that for each $\epsilon>0$,
there exists a unique stationary distribution of
$\{{\mathcal{P}}^{\epsilon}_t\}$, say ${\mu}(\epsilon)$. From the
proof of Lemma~\ref{pro2}, we can see that the resistance of an
experiment is $m_i$ if sensor $i$ is the unilateral deviator. We now
proceed to utilize Theorem~\ref{the4} to characterize
$\lim_{\epsilon\rightarrow0^+}{\mu}(\epsilon)$.

\begin{proposition}
  Consider the regular perturbed Markov process
  $\{{\mathcal{P}}^{\epsilon}_t\}$. Then
  $\lim_{\epsilon\rightarrow0^+}{\mu}(\epsilon)$ exists and the
  limiting distribution ${\mu}(0)$ is a stationary distribution of
  $\{{\mathcal{P}}^0_t\}$. Furthermore, the stochastically stable
  states (i.e., the support of ${\mu}(0)$) are contained in the set
  $\diag{S^*}$.\label{pro5}
\end{proposition}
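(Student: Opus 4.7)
The plan is to mirror the argument used for Proposition~\ref{pro4}, applying Theorem~\ref{the4} (Theory of Resistance Trees) together with the regular-perturbation property already established in Lemma~\ref{pro2}. By Theorem~\ref{the4}, Lemma~\ref{pro2} immediately yields that $\mu(0):=\lim_{\epsilon\to 0^+}\mu(\epsilon)$ exists and is a stationary distribution of $\{\mathcal{P}^0_t\}$; the remaining content is the characterization of the stochastically stable states as being contained in $\diag{S^*}$. As in Proposition~\ref{pro4}, I would construct a minimum-resistance tree $T_{\min}$ over the entire state space $\mathcal{B}'$ under the convention that every edge exiting $\mathcal{B}'\setminus\diag{\Ac}$ carries resistance $0$, and then argue that the root $h_v$ of $T_{\min}$ lies in $\diag{S^*}$.

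I would first show $h_v\in\diag{\Ac}$, mirroring Claims 1 and 2 of Proposition~\ref{pro4}. From any $(s^0,s^1)\in\mathcal{B}'\setminus\diag{\Ac}$ there is a zero-resistance path to some diagonal state, obtained by two consecutive non-experimenting Gibbs updates performed by the sequentially active agents. Adding such a path to $T_{\min}$ and deleting the redundant outgoing edge of the reached diagonal state (whose resistance is at least $m_i>2m^*$, since escaping a diagonal requires an experiment by the active agent) strictly lowers the total tree resistance, so $h_v\in\diag{\Ac}$.

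The core step is to show $h_v\in\diag{S^*}$. Suppose for contradiction that $h_v=(s^0,s^0)$ with $s^0\notin S^*$ and pick any $s^*\in S^*$. Using connectivity of $\subscr{\GG}{loc}$ and the fact that the camera control vector can always be reselected, I would construct a finite sequence $s^0=\sigma^{(0)}\to\sigma^{(1)}\to\cdots\to\sigma^{(k)}=s^*$ of feasible single-agent deviations, and lift it to a path in $\mathcal{B}'$:
\begin{align*}
  (s^0,s^0)\to(s^0,\sigma^{(1)})\to(\sigma^{(1)},\sigma^{(1)})\to\cdots\to(\sigma^{(k)},\sigma^{(k)})=(s^*,s^*).
\end{align*}
Each experimental jump carries resistance at most $Km^*$ (by $m_i\le Km^*$) and each subsequent Gibbs-acceptance jump has resistance at most $m^*$ (by the definition of $m^*$). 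Form $T'$ by adding the edges of this path to $T_{\min}$ and removing those edges of $T_{\min}$ that previously exited the intermediate and terminal lifted states.

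The main obstacle is the resistance accounting at this rewiring step. To obtain strict reduction I would invoke two facts. First, the standing choice $m_i\in(2m^*,Km^*]$ ensures that the edge of $T_{\min}$ leaving $(s^*,s^*)$ has resistance strictly greater than $2m^*$, since any escape from a diagonal state requires an experiment by some agent. Second, the modified payoff $u_i-\Delta_i$, entering the resistance formula through $\rho_i$ and $\Psi_i$, plays the role of a local potential for the binary Gibbs comparison in the $\homoasyn$ update; exploiting the identity $\Delta_i(s^1,s^0)=-\Delta_i(s^0,s^1)$ together with the definition of $m^*$, one can quantitatively link the Gibbs resistance of a single-agent move to the change in $U_g$ it induces. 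With the path $\sigma^{(0)}\to\cdots\to\sigma^{(k)}$ chosen accordingly (e.g.\ by a telescoping construction driven by the strictly positive gap $U_g(s^*)-U_g(s^0)$), the total added resistance along the lifted path is strictly less than the total resistance removed, so $T'$ is cheaper than $T_{\min}$, contradicting minimality. Combined with Theorem~\ref{the4}, this forces the support of $\mu(0)$ into $\diag{S^*}$, as claimed.
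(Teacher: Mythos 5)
Your overall frame (Lemma~\ref{pro2} plus Theorem~\ref{the4}, then showing the root of a minimum-resistance tree lies in $\diag{S^*}$) is the right one, but the decisive step is missing and the accounting you propose does not close. You build an \emph{arbitrary} feasible path $s^0=\sigma^{(0)}\to\cdots\to\sigma^{(k)}=s^*$ from connectivity of $\subscr{\GG}{loc}$, add its lift to $T_{\min}$, and remove the edges of $T_{\min}$ leaving the visited diagonal states. Your bounds give: added resistance per composite step at most $m_{i}+m^*\le (K+1)m^*$, removed resistance per diagonal state strictly greater than $2m^*$ (one experiment, $m_i>2m^*$). Since $K\ge 2$ is arbitrary and the $m_i$ may be as large as $Km^*$, the added total can exceed the removed total, so no strict decrease follows; the experiment costs $m_{i_j}$ of the fresh path are simply not matched by anything you remove. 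The sentence ``with the path chosen accordingly (by a telescoping construction driven by $U_g(s^*)-U_g(s^0)$)'' asserts exactly the point that needs proof: the telescoping identity relates the \emph{forward and backward resistances of the same single-deviator transition}, so it can only be exploited if the path you reverse is one whose forward resistances are already counted in $T_{\min}$.

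That is the paper's key idea, which your proposal does not contain. One first shows (the paper's Claims on $T_{\min}$) that every edge $((s,s),(s',s'))$ of $T_{\min}$ has a \emph{single} deviator and corresponds to a \emph{feasible} move, with composite resistance $m_i+\Psi_i(s',s)-(u_i(s')-\Delta_i(s',s))$; the single-deviator claim itself needs the $2(\ell-1)m^*\ge \ell m^*$ comparison enabled by $m_i>2m^*$. Then, assuming $h_v=(s^0,s^0)\notin\diag{S^*}$, one takes the \emph{unique in-tree path} from $(s^*,s^*)$ to the root and reverses it. In the difference $\chi(T')-\chi(T_{\min})$ the experiment costs $m_{i_k}$ and the symmetric terms $\Psi_{i_k}$ cancel exactly, and the identity
\begin{align*}
U_g(s^k)-U_g(s^{k-1})=\bigl(u_{i_k}(s^k)-\Delta_{i_k}(s^k,s^{k-1})\bigr)-\bigl(u_{i_k}(s^{k-1})-\Delta_{i_k}(s^{k-1},s^k)\bigr)
\end{align*}
makes the remainder telescope to $U_g(s^0)-U_g(s^*)<0$, giving the contradiction with no slack needed. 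Without this reversal argument (or an equivalent exact cancellation), your rewiring step fails. A secondary point: in the asynchronous setting the paper builds resistance trees directly over $\diag{\Ac}$ and computes resistances of composite transitions between diagonal states, rather than importing the Proposition~\ref{pro4} convention of zero-resistance exits from $\mathcal{B}'\setminus\diag{\Ac}$; your claim that two non-experimenting Gibbs updates always give a zero-resistance return to the diagonal would also need justification, since the non-experiment choice has resistance $0$ only for the better of the two remembered actions.
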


\begin{proof}
  The unperturbed Markov chain corresponds to the $\homoasyn$
  Algorithm with $\epsilon = 0$. Hence, the recurrent communication
  classes of the unperturbed Markov chain are contained in the set
  $\diag{\Ac}$. We will construct resistance trees over vertices in
  the set $\diag{\Ac}$. Denote $T_{\min}$ by the minimum resistance
  tree. The remainder of the proof is divided into the following four
  claims.

  \begin{claim} $\chi((s^0,s^0)\Rightarrow (s^1,s^1)) = m_i +
  \Psi_i(s^1, s^0)-(u_i(s^1)-\Delta_i(s^1,s^0))$ where $s^0\neq s^1$
  and the transition $s^0\rightarrow s^1$ is feasible with sensor $i$
  as the unilateral deviator.\end{claim}

  \begin{proof} One feasible path for
  $(s^0,s^0)\Rightarrow(s^1,s^1)$ is ${\LL}:=(s^0,s^0)\rightarrow
  (s^0,s^1)\rightarrow (s^1,s^1)$ where sensor $i$ experiments in the
  first transition and does not experiment in the second
  one. The total resistance of the path
  ${\LL}$ is $m_i + \Psi_i(s^1, s^0) - (u_i(s^1)-\Delta_i(s^1,s^0))$
  which is at most $m_i + m^*$.

  Denote by ${\LL}'$ the path with minimum resistance among all the
  feasible paths for $(s^0,s^0)\Rightarrow(s^1,s^1)$. Assume that the
  first transition in ${\LL}'$ is $(s^0,s^0)\rightarrow (s^0,s^2)$
  where node $j$ experiments and $s^2 \neq s^1$. Observe that the
  resistance of $(s^0,s^0)\rightarrow (s^0,s^2)$ is $m_j$. No matter
  whether $j$ is equal to $i$ or not, the path ${\LL}'$ must include
  at least one more experiment to introduce $s_i^1$. Hence the total
  resistance of the path ${\LL}'$ is at least $m_i+m_j$. Since $m_i +
  m_j > m_i+2m^*$, then the path ${\LL}'$ has a strictly larger
  resistance than the path ${\LL}$. To avoid a contradiction, the path
  ${\LL}'$ must start from the transition
  $(s^0,s^0)\rightarrow(s^0,s^1)$. Similarly, the sequent transition
  (which is also the last one) in the path ${\LL}'$ must be
  $(s^0,s^1)\rightarrow(s^1,s^1)$ and thus ${\LL}' = {\LL}$. Hence,
  the resistance of the transition $(s^0,s^0)\Rightarrow (s^1,s^1)$ is
  the total resistance of the path ${\LL}$; i.e., $m_i + \Psi_i(s^1,
  s^0)-(u_i(s^1)-\Delta_i(s^1,s^0))$.\end{proof}

  \begin{claim} All the edges $((s,s), (s',s'))$ in $T_{\min}$
  must consist of only one deviator; i.e., $s_i \neq s_i'$ and $s_{-i}
  = s_{-i}'$ for some $i\in V$.\label{claim9}\end{claim}

  \begin{proof} Assume that $(s,s)\Rightarrow (s',s')$ has
  at least two deviators. Suppose the path $\hat{\LL}$ has the minimum
  resistance among all the paths from $(s,s)$ to $(s',s')$. Then, $\ell\geq2$ experiments are
  carried out along $\hat{\LL}$. Denote $i_k$ by the unilateral deviator in the $k$-th
  experiment $s^{k-1}\rightarrow s^k$ where $1\leq k \leq \ell$,
  $s^0=s$ and $s^{\ell}=s'$. Then the resistance of $\hat{\LL}$ is at least
  $\sum_{k=1}^{\ell}m_{i_k}$; i.e., $\chi((s^0, s^0)\Rightarrow(s',
  s'))\geq \sum_{k=1}^{\ell}m_{i_k}$.
  % \begin{align*}&\hat{\LL}:=(s^0,s^0)\rightarrow(s^0,s^1)\rightarrow(s^1,s^2)\rightarrow\nnum\\
  %   &\cdots\rightarrow(s^{k-1},s^k)\rightarrow(s^k,s^k)
  % \end{align*} where $s^0=s$ and $s^k=s'$ for some $k\geq\ell$. It
  % is easy to see that $s^i\neq s^j$ for $i\neq j$, and there is no
  % loop in $\hat{\LL}$. Denote $\ell_i$ by the unilateral deviator in
  % the transition $s^{i-1}\rightarrow s^{i}$. Then any path from
  % $(s,s)$ to $(s',s')$ includes at least $k$ experimentations. Thus,
  % $\chi((s^0, s^0)\Rightarrow(s', s'))\geq \sum_{i=1}^km_{\ell_i}$.

  Let us consider the following path on $T_{\min}$:
  \begin{align*} \bar{\LL} := (s^0,s^0)\Rightarrow (s^1,s^1)
    \Rightarrow \cdots\Rightarrow (s^{\ell}, s^{\ell}).
  \end{align*} From Claim 1, we know that the total resistance of the
  path $\bar{\LL}$ is at most $\sum_{k=1}^{\ell}m_{i_k} + \ell m^*$.

  A new tree $T'$ can be obtained by adding the edges of $\bar{\LL}$
  into $T_{\min}$ and removing the redundant edges. The removed
  resistance is $strictly$ greater than $\sum_{k=1}^{\ell}m_{i_k} +
  2(\ell-1)m^*$ where $\sum_{k=1}^{\ell}m_{i_k}$ is the lower bound on the
  resistance on the edge from $(s^0, s^0)$ to $(s^{\ell}, s^{\ell})$, and $2(\ell-1)m^*$ is
  the strictly lower bound on the total resistances of leaving $(s^k,
  s^k)$ for $k = 1, \cdots, \ell-1$. The adding resistance is the total
  resistance of $\bar{\LL}$ which is at most
  $\sum_{k=1}^{\ell}m_{i_k} + \ell m^*$. Since $\ell\geq2$, we have that
  $2(\ell-1)m^* \geq \ell m^*$ and thus $T'$ has a strictly lower resistance
  than $T_{\min}$. This contradicts the fact that $T_{\min}$ is a
  minimum resistance tree.\end{proof}

  \begin{claim} Given any edge $((s,s), (s',s'))$ in $T_{\min}$,
  denote by $i$ the unilateral deviator between $s$ and $s'$. Then the
  transition $s_i\rightarrow s_i'$ is feasible.\end{claim}

  \begin{proof} Assume that the transition $s_i\rightarrow s_i'$ is
  infeasible. Suppose the path $\check{\LL}$ has the minimum
  resistance among all the paths from $(s,s)$ to $(s',s')$. Then,
  there are $\ell\geq2$ experiments in $\check{\LL}$. The remainder of
  the proof is similar to that of Claim 9.\end{proof}

  \begin{claim} Let $h_v$ be the root of $T_{\min}$. Then,
  $h_v\in\diag{S^*}$.\end{claim}

  \begin{proof} Assume that $h_v=(s^0,s^0)\notin \diag{S^*}$. Pick
  any $(s^*, s^*)\in\diag{S^*}$. By Claim 9 and 10, we have that there
  is a path from $(s^*,s^*)$ to $(s^0,s^0)$ in the tree $T_{\min}$ as
  follows:
  \begin{align*} \tilde{\mathcal{L}}:=(s^{\ell},s^{\ell})\Rightarrow
    (s^{\ell-1},s^{\ell-1})\Rightarrow\cdots \Rightarrow
    (s^1,s^1)\Rightarrow(s^0,s^0)
\end{align*}
for some $\ell\geq1$. Here, $s^*=s^{\ell}$, there is only one deviator, say
$i_k$, from $s^k$ to $s^{k-1}$, and the transition
$s^k \rightarrow s^{k-1}$ is feasible for $k = \ell, \dots, 1$.

Since the transition $s^k\rightarrow s^{k+1}$ is also feasible for $k = 0,\dots, \ell-1$, we obtain the reverse path
  $\tilde{\LL}'$ of $\tilde{\LL}$ as follows:
  \begin{align*}
    \tilde{\LL}' := (s^0,s^0)\Rightarrow (s^{1},s^{1})\Rightarrow
    \cdots \Rightarrow (s^{\ell-1},s^{\ell-1})
    \Rightarrow(s^{\ell}, s^{\ell}).
  \end{align*}

 % Denote by $\ell_j$ the unilateral deviator between $s^{j}$ and $s^{j-1}$.
  By Claim 8, the total resistance of the path
  $\tilde{\LL}$ is
  \begin{align*} \chi({\tilde{\LL}}) &= \sum_{k=1}^{\ell}m_{i_k}
    +\sum_{k=1}^{\ell}\{\Psi_{i_k}(s^k, s^{k-1})-(u_{i_k}(s^{k-1})-\Delta_{i_k}(s^{k-1},s^k))\},
  \end{align*}
  and the total resistance of the path $\tilde{\LL}'$ is
  \begin{align*}
    \chi(\tilde{\LL}')&=\sum_{k=1}^{\ell}m_{i_k}
    +\sum_{k=1}^{\ell}\Psi_{i_k}(s^{k-1},s^k)-(u_{i_k}(s^k)-\Delta_{i_k}(s^k, s^{k-1})).
  \end{align*}

  Denote $\Lambda_1' := ({\mathcal{D}}(a_{i_k}^k, r_{i_k}^k)\backslash{\mathcal{D}}(a_{i_{k-1}}^{k-1},
  r_{i_{k-1}}^{k-1}))\cap\QQ$ and $\Lambda_2' := ({\mathcal{D}}(a_{i_{k-1}}^{k-1},
  r_{i_{k-1}}^{k-1})\backslash{\mathcal{D}}(a_{i_k}^k, r_{i_k}^k))\cap\QQ$. Observe that
  \begin{align*}
    &U_g(s^k)-U_g(s^{k-1})\nnum\\
    &= u_{i_k}(s^k) - u_{i_k}(s^{k-1}) - \sum_{q\in \Lambda_1'}W_q(\frac{n_q(s^{k-1})}{n_q(s^{k-1})}-\frac{n_q(s^{k-1})}{n_q(s^k)})
    + \sum_{q\in \Lambda_2'}W_q(\frac{n_q(s^k)}{n_q(s^k)}-\frac{n_q(s^k)}{n_q(s^{k-1})}) \nnum\\
    & = (u_{i_k}(s^k) - \Delta_{i_k}(s^k, s^{k-1})) - (u_{i_k}(s^{k-1}) - \Delta_{i_k}(s^{k-1}, s^k)).
\end{align*}

% \begin{align*} \chi(\tilde{\mathcal{L}})-\chi(\tilde{\mathcal{L}}')&= \sum_{j=1}^k-(u_{\ell_j}(s^{j-1})-\Delta_{\ell_j}(s^{j-1},s^j))\nnum\\
%   &-\sum_{j=1}^k-(u_{\ell_j}(s^j)-\Delta_{\ell_j}(s^j,s^{j-1}))\nnum\\
%   &=\sum_{j=1}^{k}(U_g(s^{j})-U_g(s^{j-1}))\nnum\\&=U_g(s^*)-U_g(s^0).
%\end{align*}

We now construct a new tree $T'$ with the root $(s^*,s^*)$ by adding
the edges of $\tilde{\LL}'$ to the tree $T_{\min}$ and removing the
redundant edges $\tilde{\LL}$. Since $\Psi_{i_k}(s^{k-1}, s^k) =
\Psi_{i_k}(s^k, s^{k-1})$, the difference in the total resistances
across the trees $\chi(T')$ and $\chi(T_{\min})$ is given by
\begin{align*}
  &\chi(T')-\chi(T_{\min}) = \chi(\tilde{\LL}')-\chi(\tilde{\LL})\nnum\\
  &=\sum_{k=1}^{\ell}-(u_{i_k}(s^{k-1})-\Delta_{i_k}(s^{k-1},s^k))
  -\sum_{k=1}^{\ell}-(u_{i_k}(s^k)-\Delta_{i_k}(s^k,s^{k-1}))\nnum\\
  &=\sum_{k=1}^{\ell}(U_g(s^k)-U_g(s^{k-1}))=U_g(s^0)-U_g(s^*)<0.
\end{align*}
This contradicts that $T_{\min}$ is a minimum resistance
tree.\end{proof}

It follows from Claim 4 that
the state $h_v\in\diag{S^*}$ has minimum stochastic potential. Then
Proposition~\ref{pro5} is a direct result of Theorem~\ref{the4}.
\end{proof}

% \begin{remark} Consider the scenario where we allow that multiple
%   sensors are in the same positions and use ${\mathcal{F}}_i$
%   instead of $\hat{\mathcal{F}}_i$ in the HAL Algorithm. Then the
%   stochastically stable states in Theorem~\ref{the3} are contained
%   in the set ${\Phi}^*:=\{(s^*,s^*)\;|\; s^* \in S^*\}$ where $S^*
%   := \{s\;|\;{\rm argmax}_{s\in{\mathcal{A}}}U_g(s)\}$ if $m > 2m^*$
%   where $m^* := \max_{i\in V}\max_{(s^0,s^1)\in{\mathcal{B}},
%     s_i^0\neq s_i^1}(\Psi_i(s^0,
%   s^1)-(u_i(s^0)-\Delta_i(s^0,s^1)))$.\label{rem1}
%\end{remark}

We are now ready to show Theorem~\ref{the2}.

\textbf{Proof of Theorem~\ref{the8}:}

\begin{claim} Condition (B2) in Theorem~\ref{the5} holds.\end{claim}

\begin{proof} The proof is analogous to Claim 5.\end{proof}

\begin{claim} Condition (B3) in Theorem~\ref{the5} holds.\end{claim}

\begin{proof}
Denote by ${P}^{\epsilon(t)}$ the transition matrix of
  $\{{\mathcal{P}}_t\}$. Consider the feasible transition
  $z^1\rightarrow z^2$ with unilateral deviator $i$. The corresponding
  probability is given by
\begin{align*}
P^{\epsilon(t)}_{z^1 z^2} =
\begin{cases}
  \eta_1, \quad s^2_i\in \FF_i(a_i^1)\setminus\{s^0_i, s^1_i\},\\
  \eta_2, \quad s^2_i=s^1_i,\\
  \eta_3, \quad s^2_i=s^0_i,
\end{cases}
\end{align*}
where
\begin{align*}
  \eta_1:=\frac{\epsilon(t)^{m_i}}{N|\FF_i(a_i^1)\setminus\{s^0_i, s^1_i\}|},\quad
  \eta_2:=\frac{1-\epsilon(t)^{m_i}}{N(1+\epsilon(t)^{\rho_i(s^0,s^1)})},\quad
  \eta_3:=\frac{(1-\epsilon(t)^{m_i})\times\epsilon(t)^{\rho_i(s^0,s^1)}}{N(1+\epsilon(t)^{\rho_i(s^0,s^1)})}.
\end{align*}

The remainder is analogous to Claim 6.\end{proof}

\begin{claim} Condition (B1) in Theorem~\ref{the5} holds.\end{claim}

\begin{proof}
Observe that $|\FF_i(a_i^1)|\leq5|{\mathcal{C}}|$. Since $\epsilon(t)$ is strictly decreasing, there
is $t_0\geq 1$ such that $t_0$ is the first time when
$1-\epsilon(t)^{m_i}\geq \epsilon(t)^{m_i}$.

% We now proceed to show that for all $t\geq t_0$ and $z^1\neq z^2$,
% it holds that $\hat{P}^{\epsilon(t)}_{z^1 z^2}\geq
% \frac{\epsilon(t)^{m_i+m^*}}{N(5r_{\max}-1)}$ by showing that
% $\hat{P}^{\epsilon(t)}_{z^1 z^2}\geq
% \frac{\epsilon(t)^{m_i+m^*}}{N(5r_{\max}-1)}$ is a uniform lower
% bound for $\eta_i$, $i=1,2,3$, in~\eqref{e2}.

Observe that for all $t\geq1$, it holds that
\begin{align*}
  \eta_1 \geq \frac{\epsilon(t)^{m_i}}{N(5|{\mathcal{C}}|-1)}\geq
  \frac{\epsilon(t)^{m_i+m^*}}{N(5|{\mathcal{C}}|-1)}.
\end{align*}

Denote $b := u_i(s^1)-\Delta_i(s^1,s^0)$ and $a :=
u_i(s^0)-\Delta_i(s^0,s^1)$. Then $\rho_i(s^0, s^1) = b-a$. Since $b-a
\leq m^*$, then for $t\geq t_0$ it holds that
\begin{align*} & \eta_2 =
  \frac{1-\epsilon(t)^{m_i}}{N(1+\epsilon(t)^{b-a})}
  =\frac{(1-\epsilon(t)^{m_i})\epsilon(t)^{\max\{a,b\}-b}}{N(\epsilon(t)^{\max\{a,b\}-b}+\epsilon(t)^{\max\{a,b\}-a})}\nnum\\
  &\geq \frac{\epsilon(t)^{m_i}\epsilon(t)^{\max\{a,b\}-b}}{2N}\geq
  \frac{\epsilon(t)^{m_i+m^*}}{N(5|{\mathcal{C}}|-1)}.
\end{align*}
Similarly, for $t\geq t_0$, it holds that
\begin{align*} \eta_3 =
  \frac{(1-\epsilon(t)^{m_i})\epsilon(t)^{\max\{a,b\}-a}}{N(\epsilon(t)^{\max\{a,b\}-b}+\epsilon(t)^{\max\{a,b\}-a})}
  \geq \frac{\epsilon(t)^{m_i+m^*}}{N(5|{\mathcal{C}}|-1)}.
\end{align*}
Since $m_i\in(2m^*, Km^*]$ for all $i \in V$ and $Km^* > 1$, then for any
feasible transition $z^1\rightarrow z^2$ with $z^1\neq z^2$, it holds
that:
\begin{align*}
  P^{\epsilon(t)}_{z^1 z^2}\geq
  \frac{\epsilon(t)^{(K+1)m^*}}{N(5|{\mathcal{C}}|-1)}
\end{align*}
for all $t\geq t_0$. Furthermore, for all $t\geq t_0$ and all $z^1\in
\diag{\Ac}$, we have that:
\begin{align*}P_{z^1z^1}^{\epsilon(t)} =
  1-\frac{1}{N}\sum_{i=1}^N\epsilon(t)^{m_i} =
  \frac{1}{N}\sum_{i=1}^N(1-\epsilon(t)^{m_i})\geq
  \frac{1}{N}\sum_{i=1}^N\epsilon(t)^{m_i}\geq
  \frac{\epsilon(t)^{(K+1)m^*}}{N(5|{\mathcal{C}}|-1)}.
\end{align*}

%Let $P(m,m)$ be the identity matrix, and $P(m,n) :=
%\prod_{t=m}^{n-1}P^{\epsilon(t)}$, $0\leq m< n$. Pick $z\in
%{\mathcal{B}}$ and let $u_z\in{\mathcal{B}}$ be such that $P_{u_zz}(t,
%t+D+1) = \min_{x\in {\mathcal{B}}} P_{xz}(t, t+D+1)$. Consequently, it
%follows that for all $t\geq t_0$,
%\begin{align*}& \min_{x\in {\mathcal{B}}} P_{xz}(t, t+D+1)\nnum\\
%  & = \sum_{i_1\in {\mathcal{B}}}\cdots\sum_{i_D\in\in {\mathcal{B}}}P_{u_zi_1}^{\epsilon(t)}\cdots P_{i_{D-1}i_D}^{\epsilon(t+D-1)}P_{i_D z}^{\epsilon(t+D)}\nnum\\
%  &\geq P_{u_zi_1}^{\epsilon(t)}\cdots P_{i_{D-1}i_D}^{\epsilon(t+D-1)}P_{i_D z}^{\epsilon(t+D)}\geq \prod_{i=0}^D(\frac{\epsilon(t+i)}{N(5|{\mathcal{C}}|-1)})^{(K+1)m^*}\nnum\\
%  &\geq (\frac{\epsilon(t)}{N(5|{\mathcal{C}}|-1)})^{(D+1)(K+1)m^*},
%\end{align*} where in the last inequality we use the fact that
%$\epsilon(t)$ is strictly decreasing. Hence, we get
%\begin{align*} &1-\lambda(P(t, t+D+1))\nnum\\ &= \min_{x,y\in{\mathcal{B}}}\sum_{z\in{\mathcal{B}}}\min\{P_{xz}(t,t+D+1), P_{yz}(t,t+D+1)\}\nnum\\
%  &\geq \sum_{z\in\mathcal{B}}\min_{x\in {\mathcal{B}}} P_{xz}(t, t+D+1)\nnum\\
%  &\geq \sum_{z\in\mathcal{B}} P_{u_zz}(t, t+D+1)\nnum\\
%  &\geq
%  |\mathcal{B}|(\frac{\epsilon(t)}{N(5|{\mathcal{C}}|-1)})^{(D+1)(K+1)m^*}.
%\end{align*}

Choose $k_{i} := (D+1){i}$ and let $i_0$ be the smallest integer such
that $(D+1)i_0\geq t_0$. Similar to~\eqref{e8}, we can derive the
following property
\begin{align*}
  \sum_{\ell=0}^{+\infty}(1-\lambda(P(k_{\ell}, k_{\ell+1})))
  \geq\frac{|\mathcal{B}|}{(N(5|{\mathcal{C}}|-1))^{(D+1)(K+1)m^*}}\sum_{i=i_0}^{+\infty}\frac{1}{(D+1)i}
  = +\infty.
\end{align*}
Hence, the weak ergodicity of $\{{\mathcal{P}}_t\}$ follows from
Theorem~\ref{the7}.\end{proof}

All the conditions in Theorem~\ref{the5} hold. Thus it follows from Theorem~\ref{the5} that
the limiting distribution is $\mu^* =
\lim_{t\rightarrow+\infty}\mu^t$. Note that
$\lim_{t\rightarrow+\infty}\mu^t =
\lim_{t\rightarrow+\infty}\mu(\epsilon(t)) = \mu(0)$ and
Proposition~\ref{pro5} shows that the support of $\mu(0)$ is contained in the set
$\diag{S^*}$. Hence, the support of $\mu^*$ is contained in the set
$\diag{S^*}$, implying that
$\lim_{t\rightarrow+\infty}{\mathbb{P}}(z(t)\in\diag{S^*}) = 1$. It
completes the proof.

\section{Conclusions}\label{sec:conclusion}

We have formulated a coverage optimization problem as a constrained
potential game. We have proposed two payoff-based distributed learning algorithms
for this coverage game and shown that these algorithms converge in
probability to the set of constrained NEs and the set of global optima
of certain coverage performance metric, respectively.

\section{Appendix}\label{sec:appendix}

For the sake of a self-contained exposition, we include here some
background in Markov chains~\cite{DI-RM:76} and the
Theory of Resistance Trees~\cite{HPY:93}.

\subsection{Background in Markov chains}\label{sec:Markovchain}

%We summarize some results on Markov chains from~\cite{DI-RM:76}.
A \emph{discrete-time Markov chain} is a discrete-time stochastic
process on a finite (or countable) state space and satisfies the
Markov property (i.e., the future state depends on its present state,
but not the past states). A discrete-time Markov chain is said to be
\emph{time-homogeneous} if the probability of going from one state to
another is independent of the time when the step is taken. Otherwise,
the Markov chain is said to be \emph{time-inhomogeneous}.

Since time-inhomogeneous Markov chains include time-homogeneous ones
as special cases, we will restrict our attention to the former in the
remainder of this section. The evolution of a time-inhomogeneous
Markov chain $\{{\mathcal{P}}_t\}$ can described by the transition
matrix $P(t)$ which gives the probability of traversing from one state
to another at each time $t$.

Consider a Markov chain $\{{\mathcal{P}}_t\}$ with time-dependent
transition matrix $P(t)$ on a finite state space $X$. Denote by $P(m,n)
:= \prod_{t=m}^{n-1}P(t)$, $0 \leq m < n$.

\begin{definition}[Strong ergodicity~\cite{DI-RM:76}] The Markov chain
  $\{{\mathcal{P}}_t\}$ is strongly ergodic if there exists a
  stochastic vector $\mu^*$ such that for any distribution $\mu$ on
  $X$ and any $m\in {\mathbb{Z}}_+$, it holds that $\lim_{k\rightarrow
    +\infty}\mu^TP(m,k) = (\mu^*)^T$.\label{def6}
\end{definition}

Strong ergodicity of $\{{\mathcal{P}}_t\}$ is equivalent to
$\{{\mathcal{P}}_t\}$ being convergent in distribution and will be
employed to characterize the long-run properties of our learning
algorithm. The investigation of conditions under which strong
ergodicity holds is aided by the introduction of the coefficient of
ergodicity and weak ergodicity defined next.

\begin{definition}[Coefficient of ergodicity~\cite{DI-RM:76}] For any
  $n\times n$ stochastic matrix $P$, its coefficient of ergodicity is
  defined as $\lambda(P) := 1-\min_{1\leq i,j\leq
    n}\sum_{k=1}^n\min(P_{ik,}P_{jk}).$
\label{def7}
\end{definition}

\begin{definition}[Weak ergodicity~\cite{DI-RM:76}] The Markov chain
  $\{{\mathcal{P}}_t\}$ is weakly ergodic if $\forall x,y,z\in X$,
  $\forall m\in {\mathbb{Z}}_+$, it holds that $\lim_{k \rightarrow
    +\infty}(P_{xz}(m,k) - P_{yz}(m,k)) = 0$.\label{def5}
\end{definition}

Weak ergodicity merely implies that $\{{\mathcal{P}}_t\}$
asymptotically forgets its initial state, but does not guarantee
convergence. For a time-homogeneous Markov chain, there is no
distinction between weak ergodicity and strong ergodicity. The
following theorem provides the sufficient and necessary condition for
$\{{\mathcal{P}}_t\}$ to be weakly ergodic.

\begin{theorem}[\cite{DI-RM:76}] The Markov chain $\{{\mathcal{P}}_t\}$
  is weakly ergodic if and only if there is a strictly increasing
  sequence of positive numbers $k_i$, $i\in {\mathbb{Z}}_+$ such that
  $\sum_{i=0}^{+\infty}(1-\lambda(P(k_i,k_{i+1})) = +\infty.$\label{the7}
\end{theorem}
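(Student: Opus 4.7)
The plan is to prove both directions by reducing weak ergodicity to the asymptotic behaviour of the ergodic coefficient $\lambda$, and then exploiting a sub-multiplicative inequality for $\lambda$ along products of transition matrices. The central algebraic lemma I would establish first is the following: for any two $n\times n$ stochastic matrices $P_1, P_2$,
\begin{equation*}
\lambda(P_1 P_2) \leq \lambda(P_1)\,\lambda(P_2).
\end{equation*}
This is proved by rewriting $\lambda(P) = \tfrac{1}{2}\max_{i,j}\sum_k |P_{ik}-P_{jk}|$ (an identity that follows from $\sum_k (P_{ik}-P_{jk})=0$ combined with $\min(a,b)=\tfrac{1}{2}(a+b-|a-b|)$), then bounding the total-variation distance between rows of the product by the contraction action of $P_2$ on signed measures with zero sum. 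I would also record the elementary consequence $|P_{xz}(m,n)-P_{yz}(m,n)| \leq \lambda(P(m,n))$, which directly links the definition of weak ergodicity (Definition~\ref{def5}) to the decay of $\lambda(P(m,n))$.

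For the sufficiency direction, assume a strictly increasing sequence $\{k_i\}$ with $\sum_i\bigl(1-\lambda(P(k_i,k_{i+1}))\bigr)=+\infty$. Sub-multiplicativity gives $\lambda(P(k_0,k_n)) \leq \prod_{i=0}^{n-1}\lambda(P(k_i,k_{i+1}))$. Taking logarithms and using $\ln(1-x)\leq -x$ for $x\in[0,1)$,
\begin{equation*}
\ln\!\Bigl(\prod_{i=0}^{n-1}\lambda(P(k_i,k_{i+1}))\Bigr)
\leq -\sum_{i=0}^{n-1}\bigl(1-\lambda(P(k_i,k_{i+1}))\bigr) \xrightarrow[n\to\infty]{} -\infty,
\end{equation*}
so $\lambda(P(k_0,k_n))\to 0$. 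For an arbitrary starting time $m$, pick the smallest index $i_m$ with $k_{i_m}\geq m$ and use $\lambda(P(m,k_n)) \leq \lambda(P(m,k_{i_m}))\,\lambda(P(k_{i_m},k_n))$, together with one more sub-multiplicative step to handle an endpoint $n$ that falls strictly between consecutive $k_j$'s; this yields $\lim_{n\to\infty}\lambda(P(m,n))=0$. Weak ergodicity then follows from the elementary bound noted above.

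For the necessity direction, assume $\{{\mathcal{P}}_t\}$ is weakly ergodic. I would first upgrade the pointwise statement of Definition~\ref{def5} to $\lim_{n\to\infty}\lambda(P(m,n))=0$ for every $m$, using the identity $\lambda(P(m,n)) = \tfrac{1}{2}\max_{x,y}\sum_z |P_{xz}(m,n)-P_{yz}(m,n)|$ and the finiteness of the state space. Then construct the required sequence inductively: set $k_0=0$, and given $k_i$, choose $k_{i+1}>k_i$ large enough that $\lambda(P(k_i,k_{i+1}))\leq \tfrac{1}{2}$. Each summand is then at least $\tfrac{1}{2}$, so the series diverges.

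The main obstacle is the algebraic proof of sub-multiplicativity of $\lambda$, since the definition through minimum column overlaps does not lend itself directly to manipulation; passing through the total-variation reformulation and carefully accounting for signs when applying $P_2$ to a zero-sum signed measure is the delicate step. Once this is in hand, the two implications are essentially bookkeeping with $\log$-inequalities and a greedy selection of the $k_i$.
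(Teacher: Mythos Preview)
The paper does not supply a proof of this theorem; it is quoted in the Appendix as a background result from~\cite{DI-RM:76}, so there is no ``paper's own proof'' to compare against. Your argument is the standard textbook proof (essentially the one in Isaacson--Madsen or Seneta): establish the total-variation identity $\lambda(P)=\tfrac{1}{2}\max_{i,j}\sum_k|P_{ik}-P_{jk}|$, derive Dobrushin sub-multiplicativity $\lambda(P_1P_2)\le\lambda(P_1)\lambda(P_2)$, and then turn the divergence of the series into $\prod_i\lambda(P(k_i,k_{i+1}))\to 0$ via the log inequality, with the converse handled by a greedy choice of $k_i$'s. All steps are correct, including your handling of arbitrary start times $m$ and intermediate endpoints $n$ (bounding by $\lambda(P(m,k_j))\cdot 1$), and the passage from pointwise convergence in Definition~\ref{def5} to $\lambda(P(m,n))\to 0$ using finiteness of $X$.
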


%\begin{definition}[Strong ergodicity]
%  The inhomogeneous Markov chain $\mathbb{MC}$ is strongly ergodic if
%  there exists a stochastic vector $\mu^*$ such that $\forall x,y\in
%  {\mathbb{S}}$, $\forall m>0$, $\lim_{k\rightarrow
%    +\infty}{\mathcal{P}}_{xy}(m,k) = \mu^*_y$.\label{def6}
%\end{definition}

We are now ready to present the sufficient conditions for strong
ergodicity of the Markov chain $\{{\mathcal{P}}_t\}$.

\begin{theorem}[\cite{DI-RM:76}] A Markov chain $\{{\mathcal{P}}_t\}$
  is strongly ergodic if the following conditions hold:

  (B1) The Markov chain $\{{\mathcal{P}}_t\}$ is weakly ergodic.

  (B2) For each $t$, there exists a stochastic vector $\mu^t$ on $X$
  such that $\mu^t$ is the left eigenvector of the transition matrix
  $P(t)$ with eigenvalue $1$.

  (B3) The eigenvectors $\mu^t$ in (B2) satisfy
  $\sum_{t=0}^{+\infty}\sum_{z\in X}|\mu^t_z-\mu^{t+1}_z|<+\infty$.

  Moreover, if $\mu^* = \lim_{t\rightarrow +\infty}\mu^t$, then $\mu^*$
  is the vector in Definition~\ref{def6}.\label{the5}
\end{theorem}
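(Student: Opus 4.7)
The plan is to derive strong ergodicity by first establishing that the sequence of invariant vectors $\{\mu^t\}$ from (B2) converges to a stochastic vector $\mu^*$, and then controlling $\|\mu^T P(m,k)-(\mu^*)^T\|_1$ for an arbitrary starting distribution $\mu$ via a telescoping identity that exploits single-step stationarity together with weak ergodicity (B1). Condition (B3) immediately yields $\sum_t\|\mu^t-\mu^{t+1}\|_1<\infty$, so $\{\mu^t\}$ is Cauchy in $\ell^1$ and converges to a stochastic vector $\mu^*$ on $X$; this $\mu^*$ is the candidate limit in the ``moreover'' clause.

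Next, I would use the single-step invariance $(\mu^t)^T P(t)=(\mu^t)^T$ from (B2) to prove by induction on $k$ the identity
\[
(\mu^m)^T P(m,k) \;=\; (\mu^{k-1})^T \;+\; \sum_{t=m}^{k-2}(\mu^t-\mu^{t+1})^T P(t+1,k),
\]
obtained by alternately absorbing one factor $P(t)$ into the left eigenvector $(\mu^t)^T$ and substituting $\mu^t=\mu^{t+1}+(\mu^t-\mu^{t+1})$ before peeling off the next factor. With this in hand, the proof of strong ergodicity starts from the decomposition
\[
\mu^T P(m,k)-(\mu^*)^T \;=\; (\mu-\mu^m)^T P(m,k) \;+\; \sum_{t=m}^{k-2}(\mu^t-\mu^{t+1})^T P(t+1,k) \;+\; (\mu^{k-1}-\mu^*)^T.
\]
The last term vanishes by the first step. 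The first term is a signed measure of total mass zero propagated through $P(m,k)$; weak ergodicity (B1) makes the rows of $P(m,k)$ asymptotically identical, so any zero-mass input is transported to $0$ in $\ell^1$ as $k\to\infty$. For the middle sum, fix $\varepsilon>0$ and use (B3) to choose $N$ with $\sum_{t\ge N}\|\mu^t-\mu^{t+1}\|_1<\varepsilon/2$; applying the contractivity $\|\nu^T P\|_1\le\|\nu\|_1$ (valid for any signed $\nu$ and stochastic $P$) bounds the tail indices $t\ge N$ uniformly in $k$ by $\varepsilon/2$. The finite head $m\le t<N$ is a sum of finitely many zero-mass signed measures, each of which goes to $0$ in $\ell^1$ under $P(t+1,k)$ by (B1), so the head is eventually $<\varepsilon/2$.

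The main obstacle I anticipate is handling the middle sum: a naive triangle-inequality bound yields only the finite constant $\sum_{t=m}^\infty\|\mu^t-\mu^{t+1}\|_1$, which does not vanish as $k\to\infty$ for fixed $m$. The essential move is the head/tail split above, where (B1) controls a fixed-length head uniformly in $k$ and (B3) controls the tail uniformly in $m$. This interplay between weak ergodicity and $\ell^1$-summability of the invariant vectors is exactly the bridge from weak to strong ergodicity and is the crux of the theorem; verifying the zero-mass propagation statement from (B1) (equivalently, that the Dobrushin coefficient $\lambda(P(m,k))\to 0$) is a short additional lemma.
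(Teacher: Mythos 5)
The paper does not prove this theorem: it is quoted verbatim from Isaacson--Madsen \cite{DI-RM:76} as background material in the appendix, so there is no in-paper argument to compare against. Your proposal is a correct, self-contained proof, and it is in fact the classical argument for this result. All the key steps check out: (B3) gives $\ell^1$-Cauchyness of $\{\mu^t\}$ and hence the limit $\mu^*$; the telescoping identity $(\mu^m)^T P(m,k) = (\mu^{k-1})^T + \sum_{t=m}^{k-2}(\mu^t-\mu^{t+1})^T P(t+1,k)$ follows by induction exactly as you describe; and the three-term decomposition is handled correctly, with the head/tail split on the middle sum being precisely the point where (B1) and (B3) must cooperate. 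You correctly identify the one lemma that needs verifying, namely that a zero-mass signed vector $\nu$ satisfies $\|\nu^T P(m,k)\|_1 \le \lambda(P(m,k))\,\|\nu\|_1 \to 0$; on a finite state space, the weak ergodicity of Definition~\ref{def5} is equivalent to $\lambda(P(m,k))\to 0$ for every $m$ (this is the standard Dobrushin contraction bound), and since (B1) is assumed for every starting time, each head term $(\mu^t-\mu^{t+1})^T P(t+1,k)$ indeed vanishes as $k\to\infty$. The only cosmetic caveat is that your first-term estimate also needs $\mu^m$ fixed while $k\to\infty$, which is exactly how strong ergodicity is quantified in Definition~\ref{def6}, so nothing is missing.
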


\subsection{Background in the Theory of Resistance
  Trees}\label{sec:resistancetrees}

Let $P^0$ be the transition matrix of the time-homogeneous Markov
chain $\{{\mathcal{P}}^0_t\}$ on a finite state space $X$. And let
$P^{\epsilon}$ be the transition matrix of a \emph{perturbed Markov
  chain}, say $\{{\mathcal{P}}^{\epsilon}_t\}$. With probability
$1-\epsilon$, the process $\{{\mathcal{P}}^{\epsilon}_t\}$ evolves
according to $P^0$, while with probability $\epsilon$, the transitions
do not follow $P^0$.

% And let
% $P^{\epsilon}$ be the transition matrix of a perturbed Markov
% chain,  say $\{{\mathcal{P}}^{\epsilon}_t\}$.
% where with probability $1-\epsilon$ the process
% $\{{\mathcal{P}}^{\epsilon}_t\}$ evolves according to the transition
% matrix $P^0$, while with probability $\epsilon$ the transitions, which
% do not follow the transition matrix $P^0$, occur.
%A \emph{perturbed Markov chain} associated with $\PP_t$, say
%$\{\PP^\epsilon_t\}$, $\epsilon >0$, is defined as follows.  With
%probability $1-\epsilon$ the process $\{{\mathcal{P}}^{\epsilon}_t\}$
%evolves according to the transition matrix $P^0$, while with
%probability $\epsilon$ transitions are defined through the alternative
%matrix $P^1$.
A family of stochastic processes $\{{\mathcal{P}}^{\epsilon}_t\}$ is
called a \emph{regular perturbation} of $\{{\mathcal{P}}^0_t\}$ if the
following holds $\forall x,y\in X$: (A1) For some $\varsigma>0$, the Markov chain
$\{{\mathcal{P}}^{\epsilon}_t\}$ is irreducible and aperiodic for all
$\epsilon\in(0,\varsigma]$.

(A2) $\lim_{\epsilon\rightarrow0^+} P_{xy}^{\epsilon} = P_{xy}^0$.

(A3) If $P_{xy}^{\epsilon}>0$ for some $\epsilon$, then there exists a
real number $\chi(x\rightarrow y)\geq0$ such that
$\lim_{\epsilon\rightarrow0^+}P_{xy}^{\epsilon}/\epsilon^{\chi(x\rightarrow
  y)}\in(0,+\infty)$.

In (A3), $\chi(x\rightarrow y)$ is called
the resistance of the transition from $x$ to $y$.

Let $H_1,H_2,\cdots,H_J$ be the recurrent communication classes of the
Markov chain $\{{\mathcal{P}}^0_t\}$. Note that within each class
$H_\ell$, there is a path of zero resistance from every state to every
other. Given any two distinct recurrence classes $H_\ell$ and $H_k$,
consider all paths which start from $H_\ell$ and end at $H_k$. Denote
$\chi_{\ell k}$ by the least resistance among all such paths.

Now define a complete directed graph $\mathcal{G}$ where there is one
vertex $\ell$ for each recurrent class $H_\ell$, and the resistance on
the edge $(\ell,k)$ is $\chi_{\ell k}$. An $\ell$-\emph{tree} on
$\mathcal{G}$ is a spanning tree such that from every vertex $k\neq
\ell$, there is a unique path from $k$ to $\ell$. Denote by $G(\ell)$
the set of all $\ell$-\emph{trees} on $\mathcal{G}$. The resistance of
an $\ell$-\emph{tree} is the sum of the resistances of its edges. The
\emph{stochastic potential} of the recurrent class $H_\ell$ is the
least resistance among all $\ell$-\emph{trees} in $G(\ell)$.

\begin{theorem}[\cite{HPY:93}] Let $\{{\mathcal{P}}^{\epsilon}_t\}$ be
  a regular perturbation of $\{{\mathcal{P}}^0_t\}$, and for each
  $\epsilon>0$, let $\mu(\epsilon)$ be the unique stationary
  distribution of $\{{\mathcal{P}}^{\epsilon}_t\}$. Then
  $\lim_{\epsilon\rightarrow0^+}\mu(\epsilon)$ exists and the limiting
  distribution $\mu(0)$ is a stationary distribution of
  $\{{\mathcal{P}}^0_t\}$. The stochastically stable states (i.e., the
  support of $\mu(0)$) are precisely those states contained in the
  recurrence classes with minimum stochastic potential.\label{the4}
\end{theorem}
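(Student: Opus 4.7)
The plan is to use the Markov chain tree theorem (the Freidlin--Wentzell representation) combined with the resistance asymptotics from (A3). By (A1), for each $\epsilon\in(0,\varsigma]$ the chain $\{{\mathcal{P}}^\epsilon_t\}$ is irreducible and aperiodic on the finite set $X$, so the unique stationary distribution $\mu(\epsilon)$ admits the representation $\mu_x(\epsilon) = \sigma_x(\epsilon)/\sum_{y\in X}\sigma_y(\epsilon)$, where $\sigma_x(\epsilon)=\sum_{T\in G(x)}\prod_{(a,b)\in T}P^\epsilon_{ab}$ runs over all in-trees rooted at $x$ on the complete digraph on $X$. This is the starting identity on which the entire analysis rests.

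First I would extract leading orders in $\epsilon$. By (A3), every positive transition satisfies $P^\epsilon_{ab}=c_{ab}\,\epsilon^{\chi(a\to b)}(1+o(1))$ with $c_{ab}>0$, so each summand in $\sigma_x(\epsilon)$ is $\Theta(\epsilon^{R(T)})$ for $R(T):=\sum_{(a,b)\in T}\chi(a\to b)$. Setting $r_x:=\min_{T\in G(x)}R(T)$ and $r^*:=\min_{y\in X}r_y$, one obtains $\sigma_x(\epsilon)=K_x\,\epsilon^{r_x}(1+o(1))$ for a positive constant $K_x$, and hence $\mu_x(\epsilon)\to \mu_x(0)$, a finite nonnegative limit that is strictly positive iff $r_x=r^*$. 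This simultaneously proves existence of the limit and identifies its support as $\{x\in X:r_x=r^*\}$. For the stationarity claim, passing to the limit in $\mu(\epsilon)^\top P^\epsilon = \mu(\epsilon)^\top$ using (A2) together with the continuity just established gives $\mu(0)^\top P^0 = \mu(0)^\top$.

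The crux is to translate the state-level potentials $r_x$ into the class-level stochastic potentials used in the theorem's statement. I would argue by a tree-surgery correspondence: any minimum-resistance $x$-tree on $X$ decomposes into a zero-resistance spanning in-tree inside each recurrence class $H_\ell$ of $\{{\mathcal{P}}^0_t\}$ (such trees exist since each $H_\ell$ is strongly connected under $P^0$) plus a ``backbone'' which, after contracting each $H_\ell$ to a vertex $\ell$, yields an $\ell(x)$-tree on the quotient digraph $\mathcal{G}$, where $\ell(x)$ is the class containing $x$. Conversely, any $\ell$-tree on $\mathcal{G}$ lifts back by choosing zero-resistance in-trees inside each class, with the same total resistance. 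Moreover, if $x$ lies in no recurrent class, costless transitions of $P^0$ allow us to reroute out of $x$ without incurring additional resistance, producing an in-tree rooted at some recurrent state of strictly smaller total resistance; hence $r_x>r^*$ for transient $x$. Combining these two facts, $r_x$ differs from the class-level stochastic potential of $H_{\ell(x)}$ by a constant independent of $\ell$, so the states in the support of $\mu(0)$ are exactly those lying in a recurrence class of minimum class-level stochastic potential.

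The main obstacle is the third step: while the bijection between minimum $x$-trees on $X$ and $\ell(x)$-trees on the quotient $\mathcal{G}$ is combinatorially natural, verifying that the ``within-class'' resistance contribution is both zero-achievable and class-independent requires care, in particular when showing that no minimum-resistance tree can profit from routing inside a single $H_\ell$ along a nontrivial path instead of the direct $P^0$-recurrent connection. Once this combinatorial lemma is established, the three conclusions of the theorem follow at once from the leading-order expansion of $\mu_x(\epsilon)$ derived in the second paragraph.
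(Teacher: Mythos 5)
This theorem is imported verbatim from Young \cite{HPY:93} and is stated in the appendix purely as background; the paper contains no proof of it, so there is no internal argument to compare yours against. What you propose is, in outline, the standard proof --- essentially the one in Young's own appendix: the Markov chain tree theorem gives $\mu_x(\epsilon)\propto\sigma_x(\epsilon)$, assumption (A3) turns each tree sum into a leading power $\epsilon^{r_x}$, the limit exists and is supported on $\{x: r_x=r^*\}$, and stationarity of $\mu(0)$ under $P^0$ follows by passing to the limit in $\mu(\epsilon)^\top P^\epsilon=\mu(\epsilon)^\top$ using (A2). That part is sound.

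Two places where the sketch is thinner than it should be. First, the assertion that $r_x>r^*$ strictly for transient $x$: the rerouting surgery you describe (splice a zero-resistance path from $x$ into a recurrent class and delete the displaced out-edges) only yields $r_w\le r_x$ for some recurrent $w$, because the deleted out-edges may themselves have zero resistance; strictness does not follow as written. You do not actually need it: you have already shown $\mu(0)$ is a stationary distribution of $P^0$, and any stationary distribution of a finite chain vanishes on transient states, so the support is confined to the recurrent classes for free. Second, the contraction correspondence between minimum-resistance $x$-trees on $X$ and $\ell(x)$-trees on the quotient graph $\mathcal{G}$ is the real content of the theorem, and you have only named it. The subtlety is that the weight $\chi_{\ell k}$ on $\mathcal{G}$ is a minimum over \emph{paths} from $H_\ell$ to $H_k$ that may traverse transient states and other classes, so contracting each class to a vertex must also absorb the transient vertices of the $X$-tree into the backbone and show their out-edges can be charged to the inter-class path resistances without loss; the lifting direction needs zero-resistance spanning in-trees inside each class, which exist because within a recurrent class every state reaches every other by $P^0$-positive steps, each of resistance zero by (A2)--(A3). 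With that combinatorial lemma supplied (it is precisely Young's appendix lemma), your argument is complete and coincides with the cited source's.
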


\end{document}